\theoremstyle{plain}
\newtheorem{proposition}{Proposition}[section]
\newtheorem{lemma}[proposition]{Lemma}
\newtheorem{corollary}[proposition]{Corollary}
\newtheorem{theorem}[proposition]{Theorem}
\theoremstyle{definition}
\newtheorem{definition}[proposition]{Definition}
\newtheorem*{construction}{Construction}
\theoremstyle{remark}
\newcommand{\cut}{\square}
\newcommand{\mynewpage}{\newpage}
\definecolor{LightGray}{rgb}{.8,.8,.8}
\definecolor{darkred}{rgb}{0.5,0,0}
\definecolor{darkgreen}{rgb}{0, 0.3,0}
\definecolor{darkblue}{rgb}{0,0,0.6}
\newcommand{\<}{\langle}
\renewcommand{\>}{\rangle}
\newcommand{\ER}{Erd\H{o}s--R\'enyi}
\newcommand{\G}{{\mathbb{G}}}
\newcommand{\Graphs}{{\mathcal{G}}}
\newcommand{\Graphons}{\mathcal{W}_0}
\newcommand{\StepGraphons}{\mathcal{S}}
\newcommand{\StepGraphonsRF}{\mathcal{S}_0}
\newcommand{\SampledFiniteGraphs}{\mathcal{S}_0^*}
\newcommand{\ExchMeasures}{\mathcal{E}}
\newcommand{\floor}[1]{\lfloor #1 \rfloor}
\def\imod#1{\allowbreak\mkern10mu({\operator@font mod}\,\,#1)}
\newcommand{\cT}{\mathcal{T}}
\newcommand{\defn}[1]{{\bf{#1}}}
\newcommand{\M}{{\mathcal{M}}}
\newcommand{\T}{{\mathcal{T}}}
\renewcommand{\L}{\\L}
\newcommand{\Naturals}{\mathbb{N}}
\newcommand{\Nats}{\Naturals}
\DeclareMathOperator{\id}{id}
\renewcommand{\Pr}{\mathbf{P}}
\newcommand{\defas}{:=}
\newcommand{\st}{\,:\,}
\newcommand{\Ind}[1]{{\mathbf 1}_{#1}}
\def\[#1\]{\begin{align}#1\end{align}}
\newcommand{\converges}{\!\!\downarrow}
\newcommand{\diverges}{\!\!\uparrow}
\newcommand{\tind}{t_{\mathrm{ind}}}
\newcommand{\dcut}{d_{\square}}
\newcommand{\delcuthat}{\widehat{\delta}_{\square}}
\newcommand{\delcut}{\delta_{\square}}
\newcommand{\dprok}{d_{w}}
\newcommand{\normcut}[1]{\Vert{#1}\Vert_\square}
\newcommand{\normcutbig}[1]{\bigl\Vert{#1}\bigr\Vert_\square}
\newcommand{\normone}[1]{\Vert{#1}\Vert_1}
\newcommand{\Reals}{\ensuremath{\mathbb{R}}}
\newcommand{\cK}{\mathcal{K}}
\newcommand{\cP}{\mathcal{P}}
\newcommand{\UPn}{U_{\cP_n}}
\newcommand{\zons}{\{0,1\}^{\Nats^2}}
\newcommand{\seqn}[1]{\<{#1}\>_{n\in\Nats}}
\newcommand{\seqi}[1]{\<{#1}\>_{i\in\Nats}}
\newcommand{\seqj}[1]{\<{#1}\>_{j\in\Nats}}
\newcommand{\seqk}[1]{\<{#1}\>_{k\in\Nats}}
\newcommand{\seqiltj}[1]{\<{#1}\>_{i<j\in\Nats}}
\definecolor{RedColor}{rgb}{0.7,0.1,.1}
\definecolor{darkred}{rgb}{0.5,0,0}
\definecolor{darkgreen}{rgb}{0, 0.3,0}
\definecolor{darkblue}{rgb}{0,0,0.6}
\definecolor{LightGray}{rgb}{.6,.6,.6}
\begin{document}

\title[On the computability of graphons]
{On the computability of graphons}

\author[Ackerman]{Nathanael~L.~Ackerman}
\address{
Harvard University\\
Cambridge, MA 02138\\
USA}
\email{nate@math.harvard.edu}

\author[Avigad]{Jeremy Avigad}
\address{
Carnegie Mellon University \\
Pittsburgh, PA 15213\\
USA
}
\email{avigad@cmu.edu}

\author[Freer]{Cameron~E.~Freer}
\address{
	Borelian Corp.\\ 
	Cambridge, MA 02139\\
	and Remine\\
	Fairfax, VA 22031\\
	USA
	}
\email{freer@borelian.com}

\author[Roy]{Daniel~M.~Roy}
\address{
	University of Toronto\\
	Toronto, ON M5S 3G3\\
	Canada
}
\email{droy@utstat.toronto.edu}

\author[Rute]{Jason~M.~Rute}
\address{
Penn State University\\
University Park, State College, PA 16802\\
USA
}
\email{jmr71@math.psu.edu}


\begin{abstract}
	We investigate the relative computability of exchangeable binary relational data when presented in terms of the distribution of an invariant measure on graphs, or as 
	a graphon in either $L^1$ or the cut distance.
	We establish basic computable equivalences, and show that $L^1$ representations contain fundamentally more computable information than the other representations, but that $0'$ suffices to move between computable such representations. We show that $0'$ is necessary in general, but that in the case of random-free graphons, no oracle is necessary. We also provide an example of an $L^1$-computable random-free graphon that is not weakly isomorphic to any graphon with an a.e.\ continuous version.
\end{abstract}


{\let\thefootnote\relax\footnotetext{\noindent{\it
Keywords:}\enspace {
graphons, cut distance, 
invariant measures,
random-freeness,
computable probability distributions.}
}}


\maketitle

\setcounter{page}{1}
\thispagestyle{empty}

\begin{small}
\renewcommand\contentsname{\!\!\!\!}
\setcounter{tocdepth}{2}
\tableofcontents
\end{small}

\mynewpage

\section{Introduction}

A sequence of random variables is exchangeable when its distribution does not depend on the ordering of its elements.
A well-known theorem of de Finetti shows that infinite exchangeable sequences of random variables are conditionally independent and identically distributed (i.i.d.), 
meaning that, by introducing a new random variable and conditioning on its value,
any apparent dependence between the random variables in the exchangeable sequence is removed and all the variables have the same distribution.

Exchangeable sequences are models for homogeneous data sets 
and serve as building blocks for statistical models with more interesting dependency structures. 
Conditional independence and exchangeability are also central to the probabilistic programming.
Infinite exchangeable sequences arise naturally in functional probabilistic programming languages. 
Indeed, any sequence of evaluations of a closure is a finite prefix of an infinite exchangeable sequence.
The sequence is even manifestly conditionally i.i.d.:
conditioning on the closure itself, every evaluation is independent and identically distributed.

The more interesting phenomenon is the existence of (potentially stateful) probabilistic code
with the property that  
repeated evaluations produce an exchangeable sequence yet no existing variable renders the sequence conditionally i.i.d.
Exchangeability, nevertheless, licenses a programmer or compiler to commute and even prune these repeated evaluations.
These types of transformations are central to several probabilistic programming systems, 
including Church \cite{GMRBT08} and Venture \cite{Venture}.

As described in \cite{PPS2017}, a fundamental question for probabilistic programming 
is whether or not support for exchangeability is in some sense necessary on the grounds of efficiency or even computability.
By de Finetti's theorem, an infinite exchangeable sequence of random variables $X_1,X_2,\dots$ admits a representation $X_j = f(\theta,\xi_j)$ a.s., 
for all $i \in \Nats$, where $f : \Omega \times [0,1] \to [0,1]$ is a measurable function and $\theta,\xi_1,\xi_2,\dots$ are random variables, 
with $\xi_1,\xi_2,\dots$ i.i.d. and independent also from $\theta$.
Clearly, there are many such $f$. On the other hand, any such $f$ could be taken to be a concrete ``representation'' of the conditional independence underlying the sequence $X_1,X_2,\dots$,
and so it is natural to ask when some such $f$ is computable. 
(Indeed, it suffices to establish the computability of the distribution of random probability measure $\nu = \Pr[f(\theta,\xi_1) \in \cdot | \theta]$.)
This question was studied in the setting of exchangeable sequences of real random variables in \cite{MR2880271}, which established the computability of the distribution of $\nu$, and showed that it was even uniformly computable from the distribution of the exchangeable sequence,
yielding an effectivization of de~Finetti's theorem that acts like a program transformation.

Here we study a generalization of de Finetti's theorem to two dimensions,
and in particular,
the binary symmetric case of the Aldous--Hoover theorem for two-dimensional jointly exchangeable arrays.
Our focus will be on the computability of various representations of the distribution of these two-dimensional arrays, 
with a special emphasis on the representation of graphons. (The function $f(\theta,\cdot)$ is a one-dimensional analogue of a graphon.
For more details on graphons and exchangeable arrays, see \cite{MR3012035} and \cite{PSIP}, respectively.) 
The situation here is more complicated, and depends on the choice of metric. 
Several of the standard metrics are computably equivalent to each other, 
but we find that one natural way of expressing the relevant measurable object --- corresponding to the so-called edit distance --- is not computable from the distribution of the exchangeable data itself, 
unlike in the one-dimensional case. 
This mapping is possible using the halting problem $0'$ as an oracle, 
and we provide an example showing that this dependence is necessary. 
On the other hand, for a natural subclass, the so-called random-free case, 
one can recover the graphon in the edit distance metric from the distribution of the array.

In the case of computable distributions on binary symmetric exchangeable arrays, 
all these results fall short of establishing that some graphon is computable on a measure one set of points.
In fact, one cannot hope for such representations. 
We show that, in the case of computable distributions on arrays, 
there need not be any graphon generating the array
that is almost-everywhere continuous, 
providing a fundamental topological impediment to computable representations that would render it continuous on a measure-one set.

It follows that the two-dimensional setting is fundamentally different from the one-dimensional one. 
There need not be a representation of an exchangeable array that exposes the conditional independence inherent in the exchangeable array,
even if the exchangeable array itself has some computable representation.
In some special cases, we can compute an $L^1$ representation,
which exposes the conditional independence, but only allows us to represent the distribution of the exchangeable array up to an arbitrarily small, but nonzero, error term.
Our results suggest that probabilistic programming languages may need to have special support for probabilistic symmetries, such as exchangeability in the case of arrays.

\subsection{Summary of main results}

In this paper, we examine four main representations of invariant measures on graphs, as elements of complete pseudometric spaces, as we describe in
Section~\ref{background-sec}.
We consider the space of invariant measures with a metric equivalent to weak convergence, and the space of graphons under the metrics $d_1$, $\dcut$, and $\delcut$. 
The distance $d_1$ is the $L^1$ pseudometric on graphons, and is closely related to edit distance on graphs. The $\delcut$ distance between two graphons can be seen as measuring the difference in distribution between graphs sampled from them, and is obtained from the simpler $\dcut$ upon taking the infimum under measure-preserving transformations of the graphon.

We establish the key computable relationships between these representations in Section~\ref{comprep-sec}, using the framework of computable analysis. In particular, we show that there is a computable equivalence between names for invariant measures and $\delcut$-names for the corresponding graphons, and show that the identity map from $\dcut$-names to $\delcut$-names has a computable section.

While every $d_1$-name is also a $\dcut$-name, in Section~\ref{upper-bound} we show how to transform a computable $\dcut$-name into a $d_1$-name using the halting problem $0'$.
In Section~\ref{lower-bound} we show that $0'$ is necessary, by constructing a graphon with a computable $\dcut$-name such that $0'$ can be computed from any $d_1$-name --- although $d_1$ names are computable from $\dcut$ names in the random-free case.
Finally, we establish in Section~\ref{ae-cont-sec} that there is a $d_1$-computable graphon that is not even a.e.\ continuous in any topology consisting of Borel sets.

\subsection{Notation}
	Let $\lambda$ denote Lebesgue measure on $\Reals$ (though for notational convenience, we will often use $\lambda$ to also refer to Lebesgue measure on $\Reals^2$, etc.).
	For $n\in\Nats$, write $[n] \defas \{0, \ldots, n-1\}$.
	All logarithms, written $\log$, will be in base 2.

	For $e\in\Nats$, let $\{e\}$ denote the partial computable function $\Nats \to \Nats$ given by computer program $e$. For $n\in\Nats$ write $\{e\}(n)\converges$ 
	to denote that this program halts 
	on input $n$,  and 
	$\{e\}(n)\diverges$ to denote that it
	does not halt.
	For $s\in\Nats$ write $\{e\}_s(n)\converges$ 
	to denote that this program has halted 
	after at most $s$ steps,
	and $\{e\}(n)_s\diverges$ to denote that it
	has not yet halted after $s$ steps.

Write $0' \defas \{ e\in\Nats \st \{e \}(0)\converges\}$ to denote the halting set.
	Recall that a set $X\subseteq \Nats$ is (co-)c.e.\ complete when it (its complement) is computably enumerable and any computably enumerable set admits a computable many-one reduction, called an \emph{$m$-reduction}, to it (its complement, respectively). For example, $0'$ is c.e.\ complete.

\section{Graphons and invariant measures on graphs}
\label{background-sec}

In this section we provide the basic definitions and results about the two main objects of interest in this paper, graphons and invariant measures on graphs.

\subsection{Graphons}
\label{graphon-subsec}

We will formulate most of our results in terms of graphons, both for concreteness and simplicity of notation. 
In this subsection we summarize the standard notions that we will need.
		For more details on graphons, their basic properties, and notation, we refer the reader to \cite[Chapter~7]{MR3012035}, from which most of the definitions in this subsection are borrowed.
In many cases, analogous notions and results were developed earlier in terms of exchangeable arrays; for details on this connection and the history, see \cite{MR2426176} and \cite{MR2463439}.

	\begin{definition}
		A \defn{graphon} is a symmetric measurable function $W\colon [0,1]^2 \to [0,1]$. 
Let $\Graphons$ denote the set of all graphons.
	\end{definition}

	Three classes of graphons play a special role in this paper.

	\begin{definition}
		A graphon $G$ is \defn{random-free} if it is
		$\{0, 1\}$-valued a.e., in other words, when $\lambda\bigl(G^{-1}(\{0, 1\})\bigr) = 1$.
	\end{definition}

	\begin{definition}
		Let $\T$ be a topology on $[0, 1]$. A graphon $G$ is \defn{almost everywhere (a.e.)\ continuous} with respect to $\T$ if there is a set $X \subseteq [0,1]^2$ such that $\lambda(X) = 1$ and $G|_{X}$ is continuous with respect to $\T|_X$. 
	\end{definition}

	\begin{definition}
		A graphon $G$ is a \defn{step function} if there is a (finite) measurable partition $P$ of $[0,1]$ such that for every pair of parts $p, q \in P$, the graphon
		$G$ is constant on $p \times q$.  
		Write $\StepGraphons$
		to denote the class of step functions whose underlying partition divides $[0,1]$ into some finite number of equally-sized intervals, and whose range is contained in the rationals.
		Write $\StepGraphonsRF$ to denote the subset of its random-free graphons.
		Each of $\StepGraphons$ and $\StepGraphonsRF$ admits a straightforward computable enumeration.
	\end{definition}

	One can associate to each finite graph (on $[n]$ for some $n\in\Nats$) a random-free step function graphon, 
	as we now describe, 
	such that the space of graphons is the completion of the finite graphs (embedded this way) under an appropriate pseudometric on finite graphs.

	\begin{definition} 
		Let $G$ be a graph with vertex set $[n]$, for some $n\in\Nats$. Define the step function graphon $W_G\colon [0,1]^2 \to \{0,1\}$ to be such that $W_G(x,y) = 1$ if and only if there is an edge between $\floor{n x}$ and $\floor{n y}$ in $G$.
		In other words, $W_G$ is equal to $1$ on $[i/n,(i+1)/n) \times [j/n,(j+1)/n)$ if there is an edge from $i$ to $j$, and $0$ otherwise.  Observe that $W_G \in\StepGraphonsRF$.
	\end{definition}

	We will use three key pseudometrics on the space of graphons. We begin by describing the cut norm $\normcut{\cdot}$, which will allow us to define $\dcut$ and then $\delcut$, the latter of which is closely related to subsampling.
	The third pseudometric is $d_1$, more closely related to edit distance and the $L^1$ norm.

	\begin{definition}
		The	\defn{cut norm} of a symmetric measurable function $F\colon [0,1]^2 \to [-1,1]$ is defined by
\[
\normcutbig{F} \defas
\sup_{S,T\subseteq[0,1]}\left|\int_{S\times T}F(x,y)\, dx\, dy\right|
\]
where $S$ and $T$ range over measurable sets. 
		For graphons $U$ and $W$, define $\dcut(U, W)\defas \normcut{U - W}$.
	\end{definition}

	It is straightforward to verify that $\normcut{\cdot}$ is a norm on $\Graphons$, and that $d_\cut$ is a pseudometric on $\Graphons$.

	As we will see, this cut norm is too coarse for many of our purposes. 

	\begin{definition}
		The \defn{$L^1$ norm} of a symmetric measurable function $F\colon [0,1]^2 \to [-1,1]$ is defined by 
		\[
			\textstyle
			\normone{G} \defas \int \left |F(x,y) \right| \, dx\, dy .
			\]
			For graphons $U$ and $W$,
		define 
				$d_1(U, W) \defas \normone{U-W}$.
	\end{definition}

	It is a standard fact that the $L^1$ norm is a norm on $\Graphons$, and that $d_1$ is a pseudometric on $\Graphons$. 

	While the cut norm is much coarser than the $L^1$ norm, they do agree on the notion of norm zero.
The following easy lemma follows from the fact that if $\normcut{W} = 0$ then $W = 0$ a.e. 

	\begin{lemma}
		\label{zerolemma}
		If $W$ is a graphon, then $\normone{W} = 0$ if and only if $\normcut{W} = 0$. 
	\end{lemma}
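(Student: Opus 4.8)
The plan is to obtain both implications from a single fact --- that a graphon with vanishing cut norm vanishes almost everywhere --- together with the trivial bound $\normcut{W}\le\normone{W}$. For the forward direction I would first record that for any symmetric measurable $F\colon[0,1]^2\to[-1,1]$ and all measurable $S,T\subseteq[0,1]$ one has $\bigl|\int_{S\times T}F\,dx\,dy\bigr|\le\int_{S\times T}|F|\,dx\,dy\le\normone{F}$; taking the supremum over $S,T$ yields $\normcut{F}\le\normone{F}$. In particular $\normone{W}=0$ immediately gives $\normcut{W}=0$.

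For the converse, suppose $\normcut{W}=0$, so that $\int_{S\times T}W\,dx\,dy=0$ for all measurable $S,T$. The one nontrivial step is to upgrade this from rectangles to arbitrary measurable subsets of $[0,1]^2$: writing $W=W^{+}-W^{-}$, the two finite measures $E\mapsto\int_E W^{+}$ and $E\mapsto\int_E W^{-}$ agree on the $\pi$-system of measurable rectangles, which generates the Borel $\sigma$-algebra on $[0,1]^2$, so by the uniqueness theorem for measures they agree on every Borel (hence every Lebesgue-measurable) set. Thus $\int_E W\,dx\,dy=0$ for all measurable $E$, and taking $E=\{W>0\}$ and $E=\{W<0\}$ forces $\lambda(\{W\neq 0\})=0$; consequently $\normone{W}=\int|W|\,dx\,dy=0$.

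I expect no genuine obstacle here: the only ingredient beyond bookkeeping is the standard uniqueness-of-measures argument just sketched, which is exactly the content of the remark that $\normcut{W}=0$ implies $W=0$ a.e. As an aside, for graphons one can bypass even that step, since $W\ge 0$ pointwise means $\normcut{W}\ge\bigl|\int_{[0,1]^2}W\,dx\,dy\bigr|=\normone{W}$, which together with $\normcut{W}\le\normone{W}$ shows that in fact $\normcut{W}=\normone{W}$ on all of $\Graphons$ --- a strictly stronger statement, from which the claimed equivalence of ``norm zero'' is immediate.
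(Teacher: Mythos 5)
Your proof is correct, and its main body follows the same route the paper takes: the paper's one-line proof simply invokes the standard fact that $\normcut{W}=0$ implies $W=0$ a.e., which is exactly what your $\pi$-system/uniqueness-of-measures argument establishes, combined with the trivial inequality $\normcut{W}\le\normone{W}$ for the other direction. Your closing aside is actually the cleanest argument of all and is worth highlighting: since a graphon is nonnegative, taking $S=T=[0,1]$ in the definition of the cut norm gives $\normcut{W}\ge\int W = \normone{W}$, so in fact $\normcut{W}=\normone{W}$ for every $W\in\Graphons$ and the lemma is immediate with no measure-theoretic machinery. (This does not contradict the paper's remark that the cut norm is much coarser than the $L^1$ norm, since that coarseness concerns differences $U-W$, which take values in $[-1,1]$ rather than $[0,1]$; your $\pi$-system argument is the one that generalizes to that signed setting.)
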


	This lemma implies that 
	the pseudometrics $d_1$ and $d_\cut$ can be thought of as metrics on the same quotient space, namely $\Graphons / \{(G, H)\st d_1(G, H) = 0\}$, 
	even though the metrics they induce on this space are very different.

	As we will see in \S\ref{inv-meas-subsec},
	there is a standard way to associate to each graphon an invariant measure on countable graphs
	and given two graphons we would like to have a condition equivalent to the corresponding invariant measures being the same. However, it is easy to see, by applying a non-trivial measure-preserving transformation to any non-constant random-free graphon,
	that there are graphons which give rise to the same distribution but which are very far in either $d_1$ or $\dcut$.
	Hence we will need an even coarser notion of distance, which we now define. 

	\begin{definition}
		Let $W$ be a graphon and let $\varphi\colon ([0,1], \lambda) \to ([0,1], \lambda)$ be a measure-preserving map.
		Define $W^\varphi$ to be the graphon satisfying
		\[
			W^\varphi(x,y) = W\bigl(\varphi(x), \varphi(y)\bigr)
			\]
			for all $x,y\in[0,1]$.
	\end{definition}

	\begin{definition}
		\label{delcut-def}
		For graphons $U$ and $W$, define
		\[
			\delcut(U, W) \defas \inf_{\varphi} d_\cut(U, W^\varphi),
			\]
where the infimum is taken over all measure-preserving maps of $([0,1], \lambda)$ to itself. 
	\end{definition}

	The following standard result will be important when we consider the computability of the
	representations of graphons in these various metric spaces.

	\begin{lemma}
		The set $\StepGraphons$ 
		is dense in $(\Graphons, d_1)$, and its subset $\StepGraphonsRF$ 
		is dense in $(\Graphons, \dcut)$ and $(\Graphons, \delcut)$. 
	\end{lemma}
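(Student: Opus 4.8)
The plan is to treat the three density claims in turn, deriving the $\dcut$ and $\delcut$ statements from the $d_1$ statement together with one extra ingredient: that $\{0,1\}$-valued equal-interval step functions approximate constants in the cut norm. \emph{Step 1 ($\StepGraphons$ dense in $(\Graphons,d_1)$).} Fix a graphon $W$ and $\varepsilon>0$. For $k\in\Nats$ let $\mathcal{P}_{2^k}$ be the partition of $[0,1]$ into $2^k$ equal half-open intervals, and let $W_k=\Expect[W\mid \mathcal{P}_{2^k}\otimes\mathcal{P}_{2^k}]$ be the graphon that replaces $W$ on each box $I\times J$ ($I,J\in\mathcal{P}_{2^k}$) by its average there. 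Each $W_k$ is a step function on equal intervals, is symmetric (since $W$ is and the box partition is invariant under swapping coordinates), and is $[0,1]$-valued. The sequence $(W_k)$ is a martingale whose underlying $\sigma$-algebras generate the Borel sets of $[0,1]^2$, so $\normone{W-W_k}\to 0$; choose $k$ with $\normone{W-W_k}<\varepsilon/2$. Then round the finitely many block values of $W_k$ to rationals in $[0,1]$, done symmetrically (choosing the rounded value only for $i\le j$ and extending), each by less than $\varepsilon/2$; this perturbs the $L^1$ norm by less than $\varepsilon/2$ and produces an element of $\StepGraphons$ within $d_1$-distance $\varepsilon$ of $W$.

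\emph{Step 2 ($\StepGraphonsRF$ dense in $(\Graphons,\dcut)$).} Since $\dcut\le d_1$ on graphons, Step 1 already furnishes an equal-interval step function $U$ with $\dcut(W,U)<\varepsilon/2$; say $U$ has value $p_{ij}\in[0,1]$ on $V_i\times V_j$, where $V_1,\dots,V_n$ are the equal intervals. It remains to approximate $U$ within $\varepsilon/2$ in the cut norm by an element of $\StepGraphonsRF$. Subdivide each $V_i$ into $M$ equal pieces and, inside each block $V_i\times V_j$, place the $\{0,1\}$-pattern $W_G$ coming from a graph $G\sim\G(M,p_{ij})$ on $[M]$, glued symmetrically across the diagonal (for $i<j$ use a pattern on block $(i,j)$ and its transpose on $(j,i)$; for $i=j$ the pattern $W_G$ is already symmetric). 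Writing the resulting random-free graphon as $H$, splitting $S,T$ along the blocks gives the cut-norm estimate $\normcut{U-H}\le\sum_{i,j}\lambda(V_i)\lambda(V_j)\,c_{ij}$, where $c_{ij}=\normcut{p_{ij}-\widetilde H_{ij}}$ is the cut-norm distance between the constant $p_{ij}$ and the block pattern rescaled to $[0,1]^2$. The classical fact that a dense \ER\ graph $\G(M,p)$ is cut-norm close to the constant $p$ (equivalently $\dcut(W_{\G(M,p)},W_p)\to 0$, which for a constant graphon coincides with $\delcut$; see \cite[Ch.~10--11]{MR3012035}) lets us choose, for $M$ large, the patterns so that every $c_{ij}<\varepsilon/2$, whence $\normcut{U-H}<\varepsilon/2$. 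As $H$ is $\{0,1\}$-valued and a step function on $nM$ equal intervals, $H\in\StepGraphonsRF$ and $\dcut(W,H)<\varepsilon$.

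\emph{Step 3 ($\StepGraphonsRF$ dense in $(\Graphons,\delcut)$).} This is immediate from Step 2, since $\delcut(W,H)=\inf_\varphi\dcut(W,H^\varphi)\le\dcut(W,H)$ (take $\varphi=\id$), so any $\dcut$-approximation is a fortiori a $\delcut$-approximation. Alternatively, one may invoke directly the classical fact that $W_{\G(n,W)}\to W$ almost surely in $\delcut$, with $W_{\G(n,W)}\in\StepGraphonsRF$.

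The one non-routine ingredient is the cut-norm approximation of a block-constant graphon by a $\{0,1\}$-valued equal-interval step function used in Step 2; this is exactly the quasirandomness of dense \ER\ graphs, which is classical, so I would cite it rather than reprove it. Everything else — the martingale convergence, the symmetric rounding, the subadditivity of the cut norm over a block partition, and the reduction of $\delcut$ to $\dcut$ — is routine.
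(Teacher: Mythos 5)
Your proof is correct, but it runs in the opposite logical direction from the paper's, and I think your direction is the more natural one. The paper dispatches the $d_1$ case as a standard measure-theoretic fact, obtains density of $\StepGraphonsRF$ in $(\Graphons,\delcut)$ by citing the sampling theorem \cite[Theorem~11.52]{MR3012035}, and then has to climb \emph{up} from $\delcut$ to $\dcut$: it observes that the set $\{W^\varphi \st W\in\StepGraphonsRF,\ \varphi \text{ measure-preserving}\}$ is $\dcut$-dense and then asserts that each such $W^\varphi$ can itself be $\dcut$-approximated within $\StepGraphonsRF$ --- a step that is stated without detail and is the only genuinely nontrivial point of their argument. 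You instead work entirely downhill along the chain $\delcut\le\dcut\le d_1$: the dyadic averaging martingale plus rational rounding gives $d_1$-density of $\StepGraphons$; replacing each constant block $p_{ij}$ by a quasirandom $\{0,1\}$ pattern and summing the cut norm over blocks gives $\dcut$-density of $\StepGraphonsRF$; and $\delcut$-density is then immediate from $\delcut\le\dcut$ (which the paper itself uses elsewhere, in the proof of Theorem~\ref{basic-relationships}, to note that any $\dcut$-name is a $\delcut$-name). Your block estimate $\normcut{U-H}\le\sum_{i,j}\lambda(V_i)\lambda(V_j)c_{ij}$ is valid by the triangle inequality applied to $S\cap V_i$ and $T\cap V_j$, and the quasirandomness input ($\normcut{W_{\widehat\G(M,p)}-p}\to 0$) is exactly the classical fact you say it is; citing it is reasonable, though note it is essentially the same sampling machinery the paper invokes via Theorem~11.52, just applied blockwise to constants rather than globally. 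What your route buys is a self-contained argument in which the only nontrivial implication points the right way along the metric comparison; what the paper's route buys is brevity, at the cost of leaving the $\delcut$-to-$\dcut$ upgrade as an unproved assertion.
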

	\begin{proof}
		The density of step functions in $d_1$ is a standard measure-theoretic fact.
		The density of $\StepGraphonsRF$ in $\delcut$ follows from
\cite[Theorem~11.52]{MR3012035}.
This implies that 
		\[
			\bigl\{ W^\varphi \st W \in \StepGraphonsRF \textrm{~and~} \varphi \textrm{~is~a~measure-preserving~map}\bigr\}
			\]
			is dense in $\dcut$.
			But 
		for every $W\in \StepGraphonsRF$, measure-preserving map $\varphi$, and 
		$\epsilon> 0$, there is an element $V \in \StepGraphonsRF$ such that
		\[
			\dcut(W^\varphi, V) < \epsilon.
			\]
		Hence $\StepGraphonsRF$ is also dense in $\dcut$.
	\end{proof}

We will later need the following definition.
	\begin{definition} A graphon $W$ is \defn{twin-free} if for each pair of distinct points $x, y\in[0,1]$, the functions $z\mapsto W(x,z)$ and $z\mapsto W(y,z)$ disagree on a set of positive Lebesgue measure. 
	\end{definition}

\subsection{Invariant measures on graphs}
\label{inv-meas-subsec}

Invariant measures on graphs with underlying set $\Nats$
are the main object of study in this paper.
In the probability theory literature, one often studies exchangeable arrays rather than their distributions, but here we focus on their distribution as we will be interested in the measures rather than the random variables, and so that we can avoid certain technicalities and notational difficulties.

\begin{definition}
	Let $\Graphs\subseteq \zons$ denote the space of adjacency matrices of symmetric irreflexive graphs with underlying set $\Nats$.
	A probability measure $\mu$ on the space $\Graphs$ is called an \defn{invariant measure on graphs} if 
		$\mu(A) = \mu(\sigma^{-1}(A))$
	for all Borel $A \subseteq \Graphs$ and all permutations $\sigma\colon \Nats \to \Nats$.
\end{definition}

We will use the term \emph{invariant measure} to refer to invariant measures on graphs.

An important subclass of the invariant measures are those that are extreme. 

\begin{definition}
	An invariant measure $\mu$ is 
	\defn{extreme} 
	if there do not exist 
	invariant measures $\nu$ and $\pi$
	such that $\mu = \alpha \nu + (1-\alpha) \pi$ for some $\alpha \in (0, 1)$. 
\end{definition}
In our setting, the extreme measures coincide with the ergodic ones (with respect to permutations of $\Nats$).

Graphons naturally give rise to extreme 
invariant measures on graphs,
via the distribution of the countably infinite random graph obtained by sampling from the graphon, as we now describe. 

\begin{definition}
Let $W$ be a graphon and let $S$ be a countable set. 
	Let $\<\zeta_i\>_{i\in S}$ be an i.i.d.\ collection of uniform $[0,1]$-valued random variables.
	Consider the random graph 
		$\widehat{\G}(S, W)$ with vertex set $S$ where for all distinct $i,j\in S$, there is an edge between $i$ and $j$ independently, with probability $W(\zeta_i, \zeta_j)$.
	For $n\in\Nats$, we write $\widehat{\G}(n, W)$ to refer to $\widehat{\G}([n], W)$.
	When $H$ is a finite graph, we write $\widehat{\G}(S, H)$ to refer to $\widehat{\G}(S, W_H)$. 
		Finally, let $\G(S,W)$ denote the distribution of $\widehat{\G}(S,W)$.
\end{definition}
For $0<p<1$, if $W$ is the constant graphon $W \equiv p$, then $\G(\Nats, W)$ is the distribution of an \ER\ random graph.

The following lemma is standard. 
\begin{proposition}[{\cite[Theorem~11.52]{MR3012035}}]
	If $W$ is a graphon, then $\G(\Nats, W)$ is an extreme invariant measure on graphs.
\end{proposition}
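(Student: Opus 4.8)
The plan is to show two things: first, that $\G(\Nats, W)$ is an invariant measure on graphs, and second, that it is extreme. For invariance, I would argue directly from the definition of $\widehat{\G}(\Nats, W)$. Fix a permutation $\sigma\colon\Nats\to\Nats$. Since $\<\zeta_i\>_{i\in\Nats}$ is an i.i.d.\ family, the reindexed family $\<\zeta_{\sigma(i)}\>_{i\in\Nats}$ has the same joint distribution. Because the edge between $i$ and $j$ is included independently with probability $W(\zeta_i,\zeta_j)$ (using symmetry of $W$ to make this well-defined), applying $\sigma$ to the vertex labels produces a random graph whose law is obtained by substituting $\zeta_{\sigma(i)}$ for $\zeta_i$ throughout; by the exchangeability of the $\zeta$'s this law is unchanged. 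Hence $\G(\Nats,W)(A) = \G(\Nats,W)(\sigma^{-1}(A))$ for all Borel $A$, which is invariance. This step is routine.

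The substantive part is extremality. As the excerpt notes, extreme invariant measures coincide with the ergodic ones, so it suffices to show that $\G(\Nats, W)$ is ergodic with respect to the action of permutations of $\Nats$ — equivalently, that every permutation-invariant Borel set $A\subseteq\Graphs$ has measure $0$ or $1$. The standard route here is a Hewitt--Savage-type zero--one law: condition on the underlying i.i.d.\ sequence $\<\zeta_i\>_{i\in\Nats}$. Conditionally on $\<\zeta_i\>$, the edge indicators are mutually independent, and any finite permutation of $\Nats$ leaves the conditional law intact while permuting the independent coordinates; the Hewitt--Savage zero--one law (or the Kolmogorov zero--one law applied to the appropriate exchangeable $\sigma$-algebra) then forces the conditional probability of any invariant event to be $0$ or $1$ almost surely. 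One must check that this conditional $0$--$1$ value does not depend on $\<\zeta_i\>$: this follows because the $\zeta_i$ are themselves i.i.d.\ and the event is invariant under simultaneously permuting edge-slots and the $\zeta$-indices, so a second application of the zero--one law (now to the $\zeta$'s) pins the value down to a constant. Integrating over $\<\zeta_i\>$ gives that the unconditional probability of $A$ is $0$ or $1$.

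I expect the main obstacle to be handling the conditioning and the measurability bookkeeping cleanly — precisely, setting up the right product structure so that the Hewitt--Savage law applies to the edge indicators conditionally on the vertex types, and then arguing that the resulting tail-trivial value is deterministic rather than merely a.s.\ constant as a function of $\<\zeta_i\>$. Since this is a standard result cited from \cite[Theorem~11.52]{MR3012035}, I would in fact simply invoke that reference for the ergodicity (hence extremality) claim, as the excerpt already does for the companion density lemma, and give only the short direct verification of invariance sketched above; a self-contained proof would otherwise essentially reproduce the Aldous--Hoover machinery.
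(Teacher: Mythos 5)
The paper offers no proof of this proposition at all---it is stated as standard and attributed to \cite[Theorem~11.52]{MR3012035}---so your bottom-line decision to verify invariance directly and cite the reference for extremality matches what the paper actually does. Your invariance argument is fine (modulo the routine point that one also carries along the edge-randomization variables, and that invariance under finite permutations extends to all permutations by approximating cylinder events).

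However, the extremality sketch you give as a backup has a genuine soft spot. Conditionally on $\<\zeta_i\>_{i\in\Nats}$, the edge indicators are independent but \emph{not} exchangeable: the $\{i,j\}$ indicator is Bernoulli with parameter $W(\zeta_i,\zeta_j)$, and these parameters are in general all distinct. Moreover, a permutation-invariant graph event is invariant only under those permutations of the edge-index set $\binom{\Nats}{2}$ that are induced by vertex permutations, not under arbitrary finite permutations of edge slots. So neither hypothesis of Hewitt--Savage holds for the conditional law, and the ``second application'' to the $\zeta$'s does not repair this, since the first application never goes through. The standard fix is to argue unconditionally via dissociation: the restrictions of $\widehat{\G}(\Nats,W)$ to disjoint vertex sets are independent, because they are functions of disjoint blocks of the underlying i.i.d.\ randomness. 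Given a permutation-invariant event $A$, approximate it within $\epsilon$ by an event $B$ depending only on the vertices $[n]$; by invariance, $A$ is equally well approximated by the image of $B$ under the permutation exchanging $[n]$ with $\{n,\dots,2n-1\}$; these two approximants are independent by dissociation, whence $\G(\Nats,W)(A)=\G(\Nats,W)(A)^2$ and so $A$ has measure $0$ or $1$. This is exactly the Aldous--Kallenberg fact that an exchangeable, dissociated array is ergodic, hence extreme, and it is the argument the cited reference encapsulates.
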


Conversely, every extreme invariant measure arises from a graphon.

\begin{proposition}[{\cite[Theorem~11.52]{MR3012035}}]
	If $\mu$ is an extreme invariant measure on graphs then there is some graphon $W$ such that 
$\G(\Nats, W)$ and $\mu$ are the same distribution.
\end{proposition}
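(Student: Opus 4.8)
The claim is a well-known structure theorem (essentially the Aldous--Hoover theorem in the binary symmetric case), so the plan is to give the standard proof via the extreme point representation of invariant measures. First I would note that the set of invariant measures on $\Graphs$ is a convex, weak*-compact subset of the space of probability measures on $\Graphs$, so by the Choquet--type ergodic decomposition (or directly, by the de Finetti--type argument for arrays), it suffices to identify the extreme invariant measures with those of the form $\G(\Nats, W)$. The direction that every $\G(\Nats,W)$ is extreme is the previous proposition; here we need the converse.

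The key step is to recover a graphon from an extreme $\mu$. Let $(\rv{G}_{ij})_{i<j}$ be the random adjacency matrix with law $\mu$. Since $\mu$ is extreme, it is ergodic with respect to finite permutations of $\Nats$. The standard move is to apply the Aldous--Hoover representation theorem: there is a measurable function $g\colon [0,1]^4\to\{0,1\}$, symmetric in the appropriate sense, and i.i.d.\ uniform random variables $\eta_\emptyset$, $(\eta_i)_{i\in\Nats}$, $(\eta_{\{i,j\}})_{i<j}$ such that $\rv{G}_{ij} = g(\eta_\emptyset,\eta_i,\eta_j,\eta_{\{i,j\}})$ a.s. Ergodicity of $\mu$ forces the law to not depend on $\eta_\emptyset$ (it can be absorbed, since an extreme measure cannot be a nontrivial mixture, and conditioning on $\eta_\emptyset$ would exhibit such a mixture unless it is a.s.\ constant in distribution); so we may take $\rv{G}_{ij} = g(\eta_i,\eta_j,\eta_{\{i,j\}})$. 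Then define
\[
W(x,y) \defas \int_0^1 g(x,y,t)\,dt,
\]
and symmetrize if necessary (using the symmetry of $g$ coming from the symmetry of the array). By construction, conditionally on the $\eta_i$, the edges are independent with $\Pr[\rv{G}_{ij}=1\mid \eta_i,\eta_j] = W(\eta_i,\eta_j)$, which is exactly the sampling procedure defining $\widehat\G(\Nats,W)$; hence $\G(\Nats,W)$ and $\mu$ have the same distribution.

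The main obstacle is the invocation and justification of the Aldous--Hoover representation together with the absorption of the $\eta_\emptyset$ term using ergodicity; this is the substantive content and is exactly what is cited as \cite[Theorem~11.52]{MR3012035}. An alternative, more self-contained route would be to build $W$ directly from subgraph densities: for an extreme $\mu$, the limits $t(F,\mu) \defas \lim$ of induced densities exist and form a consistent family, and by the graphon existence theorem there is a $W$ realizing all of them, whence $\G(\Nats,W)=\mu$ since an invariant measure on $\Graphs$ is determined by its finite marginals and two invariant measures with matching subgraph densities coincide. Either way, the proof is a citation-level argument; since the excerpt already states this as a proposition attributed to \cite[Theorem~11.52]{MR3012035}, I would simply cite that result and sketch the sampling-identity verification above as the reason the recovered $W$ works.
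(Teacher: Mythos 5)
The paper offers no proof of this proposition at all—it is stated purely as a citation to Theorem~11.52 of Lov\'asz's book—and your proposal ultimately does the same, so the approaches coincide. Your accompanying sketch (Aldous--Hoover representation of the exchangeable array, absorption of the global randomization variable $\eta_\emptyset$ by extremality, and integration over the edge-level variable to define $W(x,y)=\int_0^1 g(x,y,t)\,dt$ so that edges are conditionally independent Bernoulli with parameter $W(\eta_i,\eta_j)$) is the standard and correct argument underlying that citation.
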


It is then natural to ask when
two graphons give rise to the same invariant measure. 

\begin{theorem}[{\cite[Theorem~13.10]{MR3012035}}]
	\label{weakly-isomorphic-defthm}
	For graphons $U$ and $W$, the following are equivalent.
	\begin{enumerate}
		\item $\G(\Nats, U)$ and $\G(\Nats, W)$ are the same distribution.

		\item $\delcut(U, W) = 0$.

		\item There are measure-preserving maps $\varphi, \psi\colon [0,1] \to [0,1]$ such that $U^\varphi = W^\psi$ a.e.
	\end{enumerate}
	When any of these equivalent conditions holds, we say that $U$ and $W$ are \defn{weakly isomorphic}.
\end{theorem}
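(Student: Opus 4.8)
The plan is to establish the cycle of implications $(3)\Rightarrow(2)\Rightarrow(1)\Rightarrow(3)$, noting that condition (3) is already symmetric in $U$ and $W$. For $(3)\Rightarrow(2)$ I would first record three soft facts: that $\delcut$ is a pseudometric on $\Graphons$ (the triangle inequality is routine, and symmetry uses that every measure-preserving map of $([0,1],\lambda)$ can be approximated, in the appropriate sense, by a measure-preserving bijection); that $\delcut\le\dcut$, immediate from taking $\varphi=\id$ in Definition~\ref{delcut-def}; and that $\delcut(V,V^\sigma)=0$ for every graphon $V$ and every measure-preserving $\sigma\colon[0,1]\to[0,1]$ (trivial when $\sigma$ is invertible mod null sets, the general case again following by the same approximation). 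Granting these, if $U^\varphi=W^\psi$ a.e.\ then $\dcut(U^\varphi,W^\psi)=\normcut{U^\varphi-W^\psi}=0$ since the cut norm of an a.e.-zero function vanishes (cf.\ Lemma~\ref{zerolemma}), so by the triangle inequality $\delcut(U,W)\le\delcut(U,U^\varphi)+\dcut(U^\varphi,W^\psi)+\delcut(W^\psi,W)=0$.

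For $(2)\Rightarrow(1)$ I would invoke the Counting Lemma, $|t(F,U)-t(F,W)|\le e(F)\,\delcut(U,W)$ for every finite graph $F$, so that $\delcut(U,W)=0$ forces the homomorphism densities $t(F,U)$ and $t(F,W)$ to agree for all $F$. The labeled induced-subgraph densities are fixed integer-coefficient combinations of homomorphism densities (inclusion--exclusion over the non-edges), hence they too agree; and for any finite graph $H$ on $[n]$ the probability that $\widehat{\G}(n,W)$ equals $H$ is exactly this induced density $\tind(H,W)$. Thus $\G(\Nats,U)$ and $\G(\Nats,W)$ assign equal mass to every cylinder event $\{G : G|_{[n]}=H\}$; these generate the Borel $\sigma$-algebra on $\Graphs$ and are closed under finite intersection, so by the uniqueness theorem for probability measures the two distributions coincide.

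The substantive direction is $(1)\Rightarrow(3)$. Here I would first reduce to the twin-free case: every graphon is weakly isomorphic, in the sense of (3), to a twin-free one, via the standard purification that re-parametrizes $[0,1]$ by the ``row-type'' map $x\mapsto W(x,\cdot)$ and pushes forward $\lambda$ (Lovász, \S13.1 of \cite{MR3012035}); since purification is merely a deterministic relabeling of the sampling coordinates, it does not change $\G(\Nats,\cdot)$. Applying this to both $U$ and $W$ yields twin-free graphons $U'$ and $W'$ with $\G(\Nats,U')=\G(\Nats,U)=\G(\Nats,W)=\G(\Nats,W')$, and with measure-preserving maps realizing $U^{\alpha}=(U')^{\alpha'}$ a.e.\ and $W^{\beta}=(W')^{\beta'}$ a.e. One then applies the rigidity theorem for twin-free graphons: two twin-free graphons inducing the same countable random graph are isomorphic mod null sets, i.e.\ there is an a.e.-defined measure-preserving bijection $\pi$ with $U'(x,y)=W'(\pi x,\pi y)$ a.e. Composing $\pi$ with the purification maps produces measure-preserving $\varphi,\psi$ with $U^\varphi=W^\psi$ a.e., as required.

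The main obstacle is precisely this twin-free rigidity statement, where the Aldous--Hoover / weak-regularity machinery genuinely enters: one must reconstruct a twin-free graphon, up to relabeling, from its full sequence of subgraph densities --- informally, from the infinite sampled graph one recovers, for a.e.\ sampled vertex $i$, its $L^1$ neighborhood type $W(\zeta_i,\cdot)$, twin-freeness makes the type map injective mod null sets, and one must check this map is measure-preserving and intertwines the two parametrizations. The purification step is the other delicate point: verifying measurability of the twin relation, that the quotient is a well-behaved (standard Borel) probability space, and that the re-coordinatization can be taken to be an honest measure-preserving self-map of $([0,1],\lambda)$. Everything else is soft once the Counting Lemma and the measure-preserving-approximation lemma are in hand; since the statement is Theorem~13.10 of \cite{MR3012035}, in the paper it suffices to cite that reference.
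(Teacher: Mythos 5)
Your proposal is correct: the cycle $(3)\Rightarrow(2)\Rightarrow(1)\Rightarrow(3)$, with the counting lemma handling $(2)\Rightarrow(1)$ and purification plus twin-free rigidity handling $(1)\Rightarrow(3)$, is exactly the standard argument behind Theorem~13.10 of \cite{MR3012035}. The paper itself offers no proof and simply cites that reference, so your closing remark matches what the paper actually does, and your sketch faithfully outlines the proof in the cited source.
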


We now describe a natural metric on the space of invariant measures.

\begin{definition}
	Let $\ExchMeasures$ be the collection of extreme invariant measures, let $\mu \in \ExchMeasures$, and let $F$ be a finite graph on $[n]$. Define $\tind(F, \mu) \defas \mu( \{ G\in \Graphs \st G|_{[n]} = F\})$. 
Fix an enumeration $\<F_i\>_{i \in \Nats}$ of finite graphs with underlying set $[n]$ for some $n \in \Nats$. For $\mu, \nu \in \ExchMeasures$, define 
\[
\dprok(\mu, \nu) \defas \sum_{i \in \Nats} 2^{-i} \, \bigl|\tind(F_i, \mu) - \tind(F_i, \nu)\bigr|.
\]
\end{definition}

The following is standard. 

\begin{lemma} The space 
	$(\ExchMeasures, \dprok)$ of extreme invariant measures is a compact Polish space with the topology of weak convergence. Further, \[
		\SampledFiniteGraphs \defas \bigl\{\G(\Nats, G) \st G \text{~is a finite graph} \bigr\}
	\]
	is a dense subset. 
\end{lemma}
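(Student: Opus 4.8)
The plan is to prove the claimed lemma about $(\ExchMeasures, \dprok)$ in three parts: that $\dprok$ is a metric inducing the topology of weak convergence, that the space is compact and Polish, and that $\SampledFiniteGraphs$ is dense.

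First I would verify that $\dprok$ is a well-defined metric. The series converges since $|\tind(F_i,\mu) - \tind(F_i,\nu)| \le 1$ for all $i$ and the coefficients $2^{-i}$ are summable; symmetry and the triangle inequality are immediate from the corresponding properties of $|\cdot|$ on $\Reals$. The only point requiring an argument is that $\dprok(\mu,\nu) = 0$ implies $\mu = \nu$: this follows because $\dprok(\mu,\nu) = 0$ forces $\tind(F,\mu) = \tind(F,\nu)$ for every finite graph $F$ on every initial segment $[n]$, and the events $\{G : G|_{[n]} = F\}$ generate the Borel $\sigma$-algebra on $\Graphs \subseteq \zons$ (they form a countable generating algebra), so the two measures agree. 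Since $\Graphs$ is a closed subset of the compact metrizable space $\zons$, it is itself compact and metrizable, and the cylinder events $\{G|_{[n]} = F\}$ are clopen; a sequence $\mu_k \to \mu$ weakly iff $\mu_k(\{G|_{[n]}=F\}) \to \mu(\{G|_{[n]}=F\})$ for every such clopen cylinder (by the portmanteau theorem, as the indicator functions of these sets are continuous and their linear span is dense in $C(\Graphs)$ by Stone--Weierstrass). But convergence of all these cylinder probabilities is exactly convergence in $\dprok$, by a standard $\epsilon/2^n$-tail argument. Hence $\dprok$ metrizes weak convergence on the space of \emph{all} probability measures on $\Graphs$.

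Next I would address compactness and the fact that $\ExchMeasures$ is the right space. The space $\ProbMeasures(\Graphs)$ of all probability measures on the compact metrizable space $\Graphs$ is itself compact and metrizable in the weak topology (Prokhorov / Riesz representation); it is in particular Polish. The subset of invariant measures is closed, since invariance $\mu(A) = \mu(\sigma^{-1}A)$ for each fixed permutation $\sigma$ and each cylinder $A$ is a closed condition (both sides are continuous in $\mu$), and there are only countably many finitely-supported permutations to check, which suffices. The subtlety is that $\ExchMeasures$ is the set of \emph{extreme} invariant measures, and the extreme points of a compact convex set need not form a closed set in general. Here, however, one uses the standard fact (part of the Aldous--Hoover/de Finetti theory, and implicit in the cited material) that the extreme invariant measures on graphs coincide with the ergodic ones, and that this set, equipped with the weak topology, is a compact Polish space --- indeed it is homeomorphic to the quotient of the space of graphons under weak isomorphism with the $\delcut$ metric, a point the paper develops further in Section~\ref{comprep-sec}. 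I would cite this rather than reprove it; the safest route is to invoke the same source, \cite[Chapter~11 and Theorem~13.10]{MR3012035}, together with the observation that $\tind(F,\cdot)$ separates points of $\ExchMeasures$.

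Finally, density of $\SampledFiniteGraphs$. Given any $\mu \in \ExchMeasures$, by Proposition above (the converse direction of the representation theorem) there is a graphon $W$ with $\mu = \G(\Nats, W)$. It is a standard fact that $\tind(F, \G(\Nats, W))$ depends continuously (indeed, it is a polynomial in the entries) on the values $\int_{\prod p_i \times p_j} W$, and that sampling consistency gives $\tind(F_i, \G(n,W)) \to \tind(F_i, \G(\Nats,W))$ as the graph size grows, uniformly over the finitely many $F_i$ contributing more than $\epsilon/2$ to the sum; concretely, $\G(\Nats, \widehat\G(n,W))$ --- the distribution obtained by first sampling an $n$-vertex graph $H$ from $W$ and then resampling an infinite graph from $W_H$ --- converges to $\G(\Nats, W)$ in $\dprok$ as $n \to \infty$, since the empirical subgraph densities of $\widehat\G(n,W)$ converge to those of $W$ almost surely. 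Each $\G(\Nats, \widehat\G(n,W))$ lies in the convex hull of $\SampledFiniteGraphs$, but since the target $\mu$ is extreme and $\dprok$ metrizes weak convergence, one can in fact extract a single finite graph $H_n$ at each scale with $\dprok(\G(\Nats, H_n), \mu) \to 0$ --- alternatively, one simply notes that the closure of $\SampledFiniteGraphs$ is a closed invariant set containing approximants to every extreme point, hence by Choquet-type reasoning contains all of $\ExchMeasures$. The main obstacle in the whole argument is the extreme-points-form-a-Polish-space claim: pointwise this is not a soft fact and I would lean on the cited literature rather than attempt a self-contained proof, since a direct argument would essentially recapitulate the structure theory of the graphon space modulo weak isomorphism.
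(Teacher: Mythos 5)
Your argument is essentially correct, but note that the paper offers no proof at all here: it simply declares the lemma standard and moves on, relying on the cited literature (Lov\'asz, Diaconis--Janson). So there is nothing to compare against except the references, and your write-up is a reasonable expansion of what those references contain. Two remarks. First, you correctly isolate the one non-soft point, namely that the \emph{extreme} invariant measures form a compact (hence Polish) subset even though extreme points of a compact convex set are not closed in general; the clean way to see it, consistent with your sketch, is that any weakly convergent sequence $\G(\Nats,W_n)$ has, by compactness of the graphon space under $\delcut$, a subsequence with $W_{n_k}\to W$ in $\delcut$, whence the limit is $\G(\Nats,W)$ and is again extreme. Deferring this to the cited compactness theorem is exactly what the paper implicitly does. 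Second, your density argument takes an unnecessary detour through convex hulls and ``Choquet-type reasoning'': $\G(\Nats,\widehat{\G}(n,W))$ is not an element of the convex hull of $\SampledFiniteGraphs$ but a \emph{random element of $\SampledFiniteGraphs$ itself} (each realization of $\widehat{\G}(n,W)$ is a concrete finite graph $H_n$, and $\G(\Nats,H_n)\in\SampledFiniteGraphs$), and the almost-sure convergence $\G(\Nats,\widehat{\G}(n,W))\to\G(\Nats,W)$ in $\dprok$ --- which the paper records immediately afterwards as a separate lemma --- already exhibits, for almost every sample path, a sequence in $\SampledFiniteGraphs$ converging to $\mu$. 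No extremality of $\mu$ is needed for this step. With that simplification the proof is complete and correct.
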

Note that $\SampledFiniteGraphs$ also admits a straightforward computable enumeration.

The previous lemma tells us that we can approximate an extreme invariant measure arbitrarily well by measures which come from sampling graphons induced by finite graphs. A natural question is whether it is possible to take an invariant measure and find a (possibly random) sequence of finite graphs whose corresponding graphons almost surely converge to the invariant measure we started with. 
This is possible, as the following result states.

\begin{lemma}[{\cite[Corollary~11.15]{MR3012035}}]
	\label{weak-as-convergence-of-random-graphs}
	Suppose $U$ is a graphon. Then 
	\[
		 \bigl\<\G\bigl(\Nats, \widehat{\G}(n,U)\bigr)\bigr\>_{n\in\Nats}
		\]
		is a random sequence of extreme invariant measures that almost surely converges in $(\ExchMeasures, \dprok)$ to $\G(\Nats, U)$.
\end{lemma}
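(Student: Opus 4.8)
The plan is to reduce almost-sure convergence in $\dprok$ to almost-sure convergence of individual induced-subgraph densities, and then to establish the latter by an exponential concentration estimate. First I would put all the graphs on one probability space via the usual nested coupling: fix i.i.d.\ uniform $[0,1]$ random variables $\zeta_1,\zeta_2,\dots$ and i.i.d.\ uniform $[0,1]$ random variables $\eta_{ij}$ for $i<j\in\Nats$, and let $\widehat{\G}(n,U)$ be the graph on $[n]$ with an edge between $i<j$ exactly when $\eta_{ij}<U(\zeta_i,\zeta_j)$. Since each term of the series defining $\dprok\bigl(\G(\Nats,\widehat{\G}(n,U)),\G(\Nats,U)\bigr)$ is bounded by $2^{-i}$ (the two induced densities both lie in $[0,1]$), it is enough to prove that for every fixed finite graph $F$, say on vertex set $[k]$, the density $\tind\bigl(F,\G(\Nats,\widehat{\G}(n,U))\bigr)$ converges to $\tind\bigl(F,\G(\Nats,U)\bigr)$ on an event of probability one; intersecting these countably many full-measure events and using dominated convergence for the series then gives $\dprok$-convergence almost surely.

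Next I would unwind the densities. By definition $\tind(F,\G(\Nats,W))=\Pr[\widehat{\G}([k],W)=F]$, which is the induced density
\[
\tind(F,W)=\int_{[0,1]^k}\ \prod_{\{a,b\}\in E(F)}W(x_a,x_b)\ \prod_{\{a,b\}\notin E(F)}\bigl(1-W(x_a,x_b)\bigr)\,dx_1\cdots dx_k ,
\]
and in particular $\tind(F,\G(\Nats,\widehat{\G}(n,U)))=\tind(F,W_{\widehat{\G}(n,U)})$. Unpacking the definition of $W_H$, sampling $\widehat{\G}([k],W_H)$ amounts to drawing vertices $v_1,\dots,v_k\in[n]$ i.i.d.\ uniform and, on the event that the $v_a$ are pairwise distinct, reading off the labelled induced subgraph of $H$ they span; the probability of a repeated vertex is $O_k(1/n)$. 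Hence, pointwise in the sample and for all $n\ge k$, $\tind(F,\G(\Nats,\widehat{\G}(n,U)))$ differs by $O_k(1/n)$ from the ``injective'' induced density $\tind^{\mathrm{inj}}(F,\widehat{\G}(n,U))$, i.e.\ the probability that $k$ distinct vertices of $\widehat{\G}(n,U)$, chosen uniformly without replacement, span an induced labelled copy of $F$. So everything reduces to showing $\tind^{\mathrm{inj}}(F,\widehat{\G}(n,U))\to\tind(F,U)$ almost surely.

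For this I would first identify the expectation exactly: by exchangeability of $\widehat{\G}(n,U)$ one has $\Expect\bigl[\tind^{\mathrm{inj}}(F,\widehat{\G}(n,U))\bigr]=\Pr\bigl[\widehat{\G}(n,U)|_{[k]}=F\bigr]$, and $\widehat{\G}(n,U)|_{[k]}\eqdist\widehat{\G}(k,U)$, so this equals $\tind(F,U)$ for every $n\ge k$. Then I would prove exponential concentration about this mean: $\tind^{\mathrm{inj}}(F,\widehat{\G}(n,U))$ is a function of the independent variables $\zeta_1,\dots,\zeta_n$ and $(\eta_{ij})_{i<j\le n}$, and resampling a single $\zeta_i$ (hence the neighborhood of vertex $i$), or all the $\eta$'s incident to a vertex $i$, changes it by at most $k/n$, since only the $k$-sets containing $i$ are affected; resampling a single $\eta_{ij}$ changes it by $O(1/n^2)$. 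McDiarmid's bounded-differences inequality then yields, for a constant $c(k)>0$ and all $n\ge k$,
\[
\Pr\Bigl(\bigl|\tind^{\mathrm{inj}}(F,\widehat{\G}(n,U))-\tind(F,U)\bigr|>\epsilon\Bigr)\ \le\ 2\exp\!\bigl(-c(k)\,\epsilon^2 n\bigr),
\]
which is summable in $n$; applying the Borel--Cantelli lemma along $\epsilon=1/m$, $m\in\Nats$, gives the claimed almost-sure convergence. Combining with the $O_k(1/n)$ estimate of the previous paragraph and the reduction of the first paragraph completes the argument.

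The step I expect to be the main obstacle is obtaining a \emph{summable} deviation bound: the elementary variance computation only gives $\mathrm{Var}\bigl(\tind^{\mathrm{inj}}(F,\widehat{\G}(n,U))\bigr)=O(1/n)$, so the resulting Chebyshev bound is not summable, and since $\tind^{\mathrm{inj}}(\cdot,\widehat{\G}(n,U))$ is not monotone in $n$ one cannot cheaply boost almost-sure convergence along a sparse subsequence to convergence along all $n$. The bounded-differences argument resolves this, at the cost of the slightly delicate bookkeeping above that treats the vertex randomness and the edge randomness separately. (As an alternative to the whole argument, one may simply invoke the standard fact that a $W$-random graph sequence converges almost surely to $W$ in the sense of left convergence --- this is \cite[Corollary~11.15]{MR3012035} --- since $\dprok$-convergence of the measures $\G(\Nats,\cdot)$ is by construction the same as convergence of all induced densities $\tind(F,\cdot)$.)
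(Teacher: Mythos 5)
Your argument is correct: the reduction of $\dprok$-convergence to almost-sure convergence of each individual induced density $\tind(F,\cdot)$, the exact identification of the mean via exchangeability, and the summable deviation bound from the bounded-differences inequality (handling the vertex variables $\zeta_i$ and edge variables $\eta_{ij}$ with difference bounds $k/n$ and $O(1/n^2)$ respectively) together give exactly the standard proof of the cited fact. The paper itself offers no proof and simply invokes \cite[Corollary~11.15]{MR3012035}, whose textbook proof proceeds by the same concentration-plus-Borel--Cantelli route you describe, so your write-up is a faithful self-contained version of the argument the citation stands in for.
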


\section{Notions of computability for graphons 
and invariant measures on graphs}
\label{comprep-sec}

We begin by describing the notions of computability for graphons and invariant measures on graphs, and then present some of the basic relationships between them.

\subsection{Computable pseudometric spaces}

In order to describe computable elements of the spaces of graphons and invariant measures with respect      to various pseudometrics, we will use the notion of a computable pseudometric space,
a straightforward generalization of the notion of a computable metric space in computable analysis (see, e.g., \cite{MR2519075}).

\begin{definition}
	A \defn{computable} (complete) pseudometric space consists of a triple $(M, d, \seqi{s_i})$ such that 
	\begin{itemize}
		\item $(M, d)$ is a complete pseudometric space, 

		\item $\seqi{s_i}$ is dense in $(M, d)$, and 

		\item the sequence $\seqiltj{d(s_i, s_j)}$ is a computable sequence of real numbers. 
	\end{itemize} 
\end{definition}

\begin{definition}
	Suppose $(M, d, \seqi{s_i})$ is a computable pseudometric space. A \defn{rapidly converging Cauchy sequence} is a sequence $\seqj{s_{k_j}}$ for which  
	\[
		d(s_{k_j}, s_{k_\ell}) \leq 2^{-j}
		\]
		for $j < \ell \in \Nats$.
 
		A rapidly converging Cauchy sequence is called a \defn{$d$-name} for the limiting value $\lim_{n \to \infty} s_{k_n}$. We say that $\seqj{s_{k_j}}$ is \defn{computable}  in $d$
		if the sequence of natural numbers $\seqj{k_j}$ is computable, and that an element $s \in M$ is \defn{computable} if it has some $d$-name that is computable. (These notions relativized to an oracle are defined in the obvious way.)
\end{definition}

Roughly, a name for an element of the pseudometric space is a sequence of approximations that converges with rate $n \mapsto 2^{-n}$. Note that the choice of this rate is somewhat arbitrary, since given a sequence that converges with some other computable rate, one can computably ``thin out'' the sequence so that it converges at the rate we have chosen.

The computational strength needed to produce a $d$-name provides a measure of the complexity of the corresponding element of the represented space.

The pseudometric spaces we have considered so far can be straightforwardly made into computable pseudometric spaces using the computable enumerations of dense subsets we have identified.

\begin{lemma}
	The following are computable pseudometric spaces:
	\begin{itemize}
		\item $(\Graphons, d_1, \StepGraphons)$,

		\item $(\Graphons, \dcut, \StepGraphons)$,

		\item $(\Graphons, \delcut, \StepGraphons)$, and

		\item $(\ExchMeasures, \dprok, \SampledFiniteGraphs)$. 
	\end{itemize}
\end{lemma}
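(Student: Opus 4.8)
The plan is to verify the three bullet points of the definition of a computable pseudometric space for each of the four triples listed, relying on the standard facts already recorded in the excerpt.

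First I would handle completeness. For the three graphon spaces, the relevant quotients $\Graphons/\{d_1=0\}$, $\Graphons/\{\dcut=0\}$, $\Graphons/\{\delcut=0\}$ are complete — this is standard (Lov\'asz, \cite{MR3012035}, Chapters~8 and~9): $L^1$ completeness is classical, and completeness of the cut norm and cut distance spaces is \cite[Theorem~9.23]{MR3012035} and surrounding material. Since a pseudometric space is complete exactly when the associated quotient metric space is, this gives the first bullet for the first three triples. For $(\ExchMeasures, \dprok)$, completeness follows from the preceding lemma in the excerpt, which asserts that $(\ExchMeasures, \dprok)$ is a compact Polish space (in particular complete) with the topology of weak convergence; alternatively one checks directly that $\dprok$ is a weighted $\ell^1$-type combination of the continuous functionals $\tind(F_i,\cdot)$ on the compact space $\ExchMeasures$.

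Next, density of the proposed dense sets. The excerpt has already done this work: the lemma stating ``$\StepGraphons$ is dense in $(\Graphons, d_1)$, and $\StepGraphonsRF$ is dense in $(\Graphons,\dcut)$ and $(\Graphons,\delcut)$'' gives density of $\StepGraphons$ in all three graphon metrics, since $\StepGraphonsRF \subseteq \StepGraphons$ and a superset of a dense set is dense. Density of $\SampledFiniteGraphs$ in $(\ExchMeasures,\dprok)$ is exactly the content of the earlier lemma on $(\ExchMeasures,\dprok)$ being compact Polish with $\SampledFiniteGraphs$ dense. So the second bullet is immediate in all four cases.

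The remaining, and only mildly technical, point is the third bullet: that the array $\seqiltj{d(s_i,s_j)}$ of pairwise distances among the enumerated dense points is a computable sequence of reals. For $d_1$ between two elements of $\StepGraphons$: both step functions refine to a common partition of $[0,1]$ into $N$ equal intervals (take $N$ a common multiple of their block counts), on which each takes a rational value, so $\normone{U-V}$ is a finite sum of rationals times $1/N^2$ — exactly computable, hence trivially a computable real uniformly in the indices. For $\dcut$ between two step functions with rational values on a common $N\times N$ grid, the supremum defining the cut norm over measurable $S,T$ is attained at $S,T$ that are unions of grid intervals (a standard averaging/extreme-point argument: for fixed $T$ the integrand is linear in $\Ind{S}$, so one may push $S$ to an extreme point of the relevant polytope, i.e.\ a union of grid cells, and symmetrically for $T$), so the cut norm is a maximum over finitely many rational quantities, again exactly computable uniformly. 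For $\delcut$ between two elements of $\StepGraphons$: here the value is an infimum over measure-preserving maps, so it need not be rational, but it is known to equal $\min$ over a suitable compact family and, more to the point, Lov\'asz--Szegedy-type estimates (e.g.\ via the weak regularity lemma, \cite{MR3012035}) let one approximate $\delcut$ on pairs of step functions to within any $2^{-k}$ by a finite search over fractional overlays / couplings on the common grid; this yields a computable sequence of reals. Finally, for $\dprok$ on $\SampledFiniteGraphs$: given two finite graphs $G, H$, the value $\dprok(\G(\Nats,G),\G(\Nats,H)) = \sum_i 2^{-i}|\tind(F_i,\G(\Nats,G)) - \tind(F_i,\G(\Nats,H))|$ has computable tail bound $2^{-m+1}$ after truncating at $i<m$, and each $\tind(F_i,\G(\Nats,G))$ is a finite sum of products of entries of $W_G$ (each $0$ or $1$) over injections $[n]\to[|V(G)|]$ divided by $|V(G)|^n$ — a rational computed by brute force — so the whole sum is a computable real uniformly in the two graph indices.

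The main obstacle is the $\delcut$ computability of the distance array: unlike $d_1$ and $\dcut$ on step functions, $\delcut$ is defined by an infimum over an infinite-dimensional group and the value is typically irrational, so one cannot just evaluate it exactly and must instead argue that it is approximable from above and below by finite computations — the cleanest route is to invoke the quantitative version of the weak regularity lemma / the overlay characterization of $\delcut$ from \cite{MR3012035} to get, for step functions $U,V$ on a common $N\times N$ grid and any $k$, a finite set of candidate couplings whose best value is within $2^{-k}$ of $\delcut(U,V)$. Everything else reduces to bookkeeping with common refinements of partitions and the already-cited structural results.
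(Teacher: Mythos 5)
The paper states this lemma without proof, treating it as a routine consequence of the computable enumerations of $\StepGraphons$ and $\SampledFiniteGraphs$; your write-up supplies the verification the authors leave implicit, and it is correct in substance. Your treatment of completeness, density, and the exact rational evaluation of $d_1$, $\dcut$, and $\dprok$ on the dense points is fine (for $\dcut$ the precise argument is that for a step function on a common $N\times N$ grid the optimal $S$ for fixed $T$ is the positivity set of $x\mapsto\int_T F(x,y)\,dy$, which is automatically a union of grid intervals, and then symmetrically in $T$; and in the $\tind$ formula the sum should run over \emph{all} maps $[n]\to V(G)$, not just injections, since independent uniform samples may fall in the same block --- both are cosmetic). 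You correctly isolate the one genuinely non-routine point, namely the uniform computability of $\delcut(U,V)$ for $U,V\in\StepGraphons$, but your justification there is slightly misdirected: the weak regularity lemma is not the relevant tool. What does the work is the fractional overlay characterization (\cite[\S 8.2.5]{MR3012035}): for step functions with $m$ and $n$ classes, $\delcut(U,V)$ equals the \emph{minimum}, over nonnegative $m\times n$ matrices $X$ with prescribed row and column sums, of the cut norm of the overlaid step function on the product partition. Since the objective is Lipschitz in $X$ with an explicitly computable constant and the cut norm of each overlaid step function is exactly computable (again a finite max over unions of cells), the minimum over this computable compact polytope is a computable real, uniformly in the indices, by a standard grid search giving upper and lower bounds. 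With that substitution your proof is complete; everything else is bookkeeping, as you say.
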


In this paper we are interested in the relative computability of names for graphons and invariant measures considered as elements in these various computable pseudometric spaces.

\subsection{Computable relationships between representations}

In this section we want to consider the computable relationship between various representations of graphons and exchangeable arrays. In order to do this we need a notion of a \emph{computable function} two pseudometric spaces. 

\begin{definition}
\label{definition:computable:function}
	Suppose $(M, d, S)$ and $(N, f, T)$ are computable pseudometric spaces. We say a map $g\colon M \to N$ is a \defn{computable function}, or is simply \defn{computable}, if there is a computer program $e$ such that whenever $K \defas \<k_j\>_{j \in \Nats}$ is an index sequence for a $d$-name of an element $a$ then $\{e\}^{K, \ell}$ outputs an index set for an $f$-name of $g(a)$. 

	Suppose $h \colon N\to M$ is a computable map.
	We say that a computable map $g\colon M \to  N$ is a \defn{computable equivalence witnessed by $h$} if $d\bigl(x, h(g(x))\bigr) = 0$ for all $x\in M$
	and $f\bigl(y, g(h(y))\bigr) = 0$ for all $y\in N$.
	In this case we say that the spaces are \defn{computably equivalent}.

	Let $k \colon M \to N$ be a surjective function.
	A computable map $h \colon N\to M$ is a \defn{computable section} for $k$ if
	$f\bigl(y, k(h(y))\bigr) = 0$ for all $y\in N$.
\end{definition}

In other words, a function is computable if there is an algorithm that takes a name in one space and computably transforms it into a name in the other. A computable equivalence provides a uniform method for transforming a name in one space to a name in the other and vice-versa. Note that a computable equivalence induces a bijection between the corresponding metric spaces obtained by taking the quotient by distance $0$ on each side.

We will consider computable sections in the case where the underlying sets $M$ and $N$ are the same and $k$ is the identity function. In this case, a computable section takes an $f$-name for a computable element of $N$ and returns a $d$-name for a (possibly different) computable element of $M$ such that $(N, f, T)$ cannot ``distinguish'' the points, in the sense that they have $f$-distance $0$.

Consider the following notions for an invariant measure $\mu$.
\begin{itemize}
	\item[(1)] There is a computable $\dprok$-name for $\mu$.
	
	\item[(2)] There is a graphon $W$ with a computable $\delcut$-name such that $\G(\Nats, W) = \mu$.

	\item[(3)] There is a graphon $W$ with a computable $\dcut$-name such that $\G(\Nats, W) = \mu$.

	\item[(4)] There is a graphon $W$ with a computable $d_{1}$-name such that $\G(\Nats, W) = \mu$.
\end{itemize}

The next theorem establishes relationships between these four notions
		which yield the implications in Corollary~\ref{implications}.
In fact, as we will later see, these implications are all that are possible.

\begin{theorem}
	\label{basic-relationships}
The following functions between pseudometric spaces are computable.
\begin{itemize}
	\item[(a)] A map $\alpha\colon  (\ExchMeasures, \dprok) \to (\Graphons, \delcut)$ which takes $\G(\Nats,W)$ to some graphon $U$ weakly isomorphic to $W$.

\item[(b)] The map $\beta\colon (\Graphons, \delcut) \to (\ExchMeasures, \dprok)$ which takes $W$ to $\G(\Nats,W)$.

\item[(c)] The identity map $\id\colon (\Graphons, \dcut) \to (\Graphons, \delcut)$.

\item[(d)] The identity map $\id\colon (\Graphons, d_1) \to (\Graphons, \dcut)$.
\end{itemize}
Furthermore, $\alpha$ is a computable equivalence witnessed by $\beta$, and vice-versa, and 
	there is a computable section of the identity map $\mathrm{(c)}$.
	Finally, $\mathrm{(a)}$, $\mathrm{(b)}$, and $\mathrm{(d)}$ induce bijections on the corresponding metric spaces. 
\end{theorem}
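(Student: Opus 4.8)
The plan is to prove each of the four maps computable, then verify the equivalence and section claims.

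\textbf{Maps (c) and (d).} These are the easy cases, and I would do them first. For (d), every finite-number-of-equal-intervals rational step function lies in $\StepGraphons$, and $\normcut{F} \le \normone{F}$ for every symmetric $F\colon[0,1]^2\to[-1,1]$. Hence a rapidly converging $d_1$-Cauchy sequence from $\StepGraphons$ is also rapidly converging in $\dcut$; the algorithm is literally the identity on index sequences. The map (c) is analogous: the same point $W$ satisfies $\delcut(U,W)\le\dcut(U,W)$ for all $U$, so a $\dcut$-name is already a $\delcut$-name, again witnessed by the identity on indices. (The subtlety that the dense set for $\delcut$ could be taken smaller is irrelevant here since we use $\StepGraphons$ throughout.)

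\textbf{Map (b).} I would use Lemma~\ref{weak-as-convergence-of-random-graphs} together with the density lemma. Given a computable $\delcut$-name $\seqj{V_{k_j}}$ for $W$ with $V_{k_j}\in\StepGraphons$, I want a computable $\dprok$-name for $\G(\Nats,W)$ drawn from $\SampledFiniteGraphs$. The key point is that $\dprok(\G(\Nats,U),\G(\Nats,W))$ is controlled by $\delcut(U,W)$: since $\tind(F,\G(\Nats,\cdot))$ is a homomorphism-density-type functional, the counting lemma / sampling lemma (as in \cite{MR3012035}, Chapter~10--11) gives $|\tind(F,\G(\Nats,U))-\tind(F,\G(\Nats,W))|\le c_{|F|}\,\delcut(U,W)$ for an explicit constant depending only on the number of vertices of $F$, whence $\dprok(\G(\Nats,U),\G(\Nats,W))\le g(\delcut(U,W))$ for an explicit modulus $g$ with $g(0^+)=0$. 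So from the $\delcut$-name I get a fast $\dprok$-Cauchy sequence of measures $\G(\Nats,V_{k_j})$; but these come from rational step functions, not finite graphs. To land in $\SampledFiniteGraphs$ I replace each $V_{k_j}$ by $\widehat\G(N_j,V_{k_j})$ for $N_j$ large: by Lemma~\ref{weak-as-convergence-of-random-graphs} this converges a.s.\ to $\G(\Nats,V_{k_j})$, and since $V_{k_j}$ is an explicit rational step function the law of $\widehat\G(N_j,V_{k_j})$ is a computable (finitely supported, rational) distribution on finite graphs, so I can deterministically search for a single finite graph $H_j$ with $\dprok(\G(\Nats,H_j),\G(\Nats,V_{k_j}))<2^{-j-2}$ — such $H_j$ exists with positive probability, hence exists, and the search is effective because $\dprok$ between elements of $\SampledFiniteGraphs$ is computable. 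Thinning gives the desired computable $\dprok$-name.

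\textbf{Map (a).} This is the main obstacle. Given a computable $\dprok$-name for $\mu=\G(\Nats,W)$, I must produce a computable $\delcut$-name for \emph{some} graphon $U$ weakly isomorphic to $W$ — and by Theorem~\ref{weakly-isomorphic-defthm} it suffices to get a $\delcut$-name for a graphon $U$ with $\delcut(U,W)=0$, i.e.\ with $\G(\Nats,U)=\mu$. The strategy: the $\dprok$-name gives me, for each $j$, a finite graph $F_j\in\SampledFiniteGraphs$-indexing-graph with $\dprok(\G(\Nats,F_j),\mu)\le 2^{-j}$, so $W_{F_j}\in\StepGraphonsRF$ and $\G(\Nats,W_{F_j})\to\mu$ in $\dprok$. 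Now I need that $\dprok$-convergence of these sampled measures forces $\delcut$-convergence of the underlying step graphons — this is precisely the content of the compactness of $(\Graphons,\delcut)$ and the fact that $W\mapsto\G(\Nats,W)$ is a homeomorphism of $(\Graphons/{\equiv},\delcut)$ onto $(\ExchMeasures,\dprok)$ (Lovász--Szegedy; see \cite{MR3012035}, Chapter~11 and Theorem~13.10). Effectively: I have a $\dprok$-Cauchy sequence $\G(\Nats,W_{F_j})$, and I want to thin to a $\delcut$-Cauchy subsequence of the $W_{F_j}$. The inverse modulus of continuity here is not obviously computable in general, but it does not need a full inverse modulus — I can instead \emph{search}: enumerate pairs $(i,j)$ and test whether $\delcut(W_{F_i},W_{F_j})<\epsilon$; since $\delcut$ between explicit rational step functions is upper-semicomputable (it is an infimum over measure-preserving maps, and by \cite{MR3012035} for step functions it reduces to a minimum over a finite set of "fractional overlay" measures which is computable), and since the true sequence is $\delcut$-Cauchy by the homeomorphism, for each target $2^{-j}$ a suitable index is eventually found. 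Extracting the limit $U$ of this $\delcut$-Cauchy subsequence of $W_{F_j}$, we get $\delcut(\G(\Nats,U),\text{—})$... more precisely $\G(\Nats,U)=\lim\G(\Nats,W_{F_j})=\mu$, so $U$ is weakly isomorphic to $W$. The delicate point to get right is the \emph{computability} (not just semicomputability) of $\delcut$ on pairs of explicit rational step graphons, so that the search terminates with a verified bound; I would cite the reduction of $\delcut$ for step functions to an optimization over a compact polytope of coupling matrices, where the objective is a finite max of integrals of a piecewise-constant function, hence exactly computable.

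\textbf{Wrapping up.} Having (a)--(d), I observe: $\beta\circ\alpha$ sends $\G(\Nats,W)\mapsto U\mapsto \G(\Nats,U)=\G(\Nats,W)$, so $\dprok(\mu,\beta(\alpha(\mu)))=0$; and $\alpha\circ\beta$ sends $W\mapsto\G(\Nats,W)\mapsto U$ with $\G(\Nats,U)=\G(\Nats,W)$, so $\delcut(W,\alpha(\beta(W)))=0$ by Theorem~\ref{weakly-isomorphic-defthm}. Thus $\alpha,\beta$ are mutually witnessing computable equivalences. For the computable section of (c): given a $\delcut$-name for $W$, apply $\alpha\circ\beta$ to obtain a $\delcut$-name for a graphon $U$ with $\delcut(U,W)=0$; but $\alpha$'s output was built as a $\delcut$-limit of elements $W_{F_j}\in\StepGraphonsRF\subseteq\StepGraphons$, and since each $W_{F_j}$ is a $0$-radius ball in $\dcut$ around itself, the same index sequence is a $\dcut$-name — hence we have produced, from a $\delcut$-name, a $\dcut$-name of a $\delcut$-equivalent graphon, which is exactly a computable section of $\id\colon(\Graphons,\dcut)\to(\Graphons,\delcut)$. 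Finally, (a), (b), (d) descend to well-defined injections on the quotient metric spaces: (b) and (a) because $\delcut(U,W)=0\iff\G(\Nats,U)=\G(\Nats,W)$ (Theorem~\ref{weakly-isomorphic-defthm}), and (d) because $\normone{\cdot}$ and $\normcut{\cdot}$ have the same kernel (Lemma~\ref{zerolemma}); surjectivity of the induced maps onto the relevant quotient is immediate from the underlying surjectivity statements already recorded, giving bijections.
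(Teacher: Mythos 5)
Your handling of maps (b), (c), and (d) matches the paper's (the paper proves (b) via the Counting Lemma bound $|\tind(F,U)-\tind(F,V)|\le 4\binom{k}{2}\delcut(U,V)$, which is the same quantitative fact you invoke). However, there are two genuine gaps in the remaining parts.

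First, in (a) your extraction of a $\delcut$-rapidly-Cauchy subsequence does not work as described. You correctly note that $W_{F_j}\to U$ in $\delcut$ by compactness and the continuity of the inverse of $W\mapsto\G(\Nats,W)$, but mere convergence plus the ability to compute pairwise distances $\delcut(W_{F_i},W_{F_j})$ does not let you certify proximity to the limit: observing that finitely many pairs are $\epsilon$-close says nothing about the tail, and a convergent sequence with computable pairwise distances can have a noncomputable limit. What you actually need is an effective inverse modulus, i.e.\ an explicit bound on $\delcut(W_{F_j},U)$ in terms of data computable from the $\dprok$-name. The paper supplies exactly this via the sampling lemma \cite[Lemma~10.16]{MR3012035}: with probability at least $1-e^{-k/(2\log k)}$ one has $\delcut\bigl(U,W_{\widehat{\G}(k,U)}\bigr)\le 22/\sqrt{\log k}$, and since the law of $\widehat{\G}(k,U)$ is determined by $\mu$, one can computably pick a finite graph $G_k$ with $\delcut(W_{G_k},U)\le 44/\sqrt{\log k}$. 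Without some such quantitative statement your search has no termination certificate.

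Second, your computable section of (c) rests on a false step: a $\delcut$-rapidly-Cauchy sequence of step graphons is \emph{not} automatically a $\dcut$-name. Consecutive terms $W_{F_j}$ and $W_{F_{j+1}}$ can satisfy $\delcut(W_{F_j},W_{F_{j+1}})<2^{-j}$ while $\dcut(W_{F_j},W_{F_{j+1}})$ is close to $1$ (take one to be a rearrangement of the other); the fact that each $W_{F_j}$ is at $\dcut$-distance $0$ from itself is irrelevant. Constructing the section is the genuinely nontrivial part of the theorem: the paper demands double-exponentially good $\delcut$-accuracy $\delcut(W_{G_n},U)<2^{-(2^{2n}+1)}$ and then invokes \cite[Theorem~9.29]{MR3012035}, $\delcuthat(G'_n,H')\le 45/\sqrt{-\log\delcut(W_{G'_n},W_{H'})}$, to choose a vertex reordering of the next approximant that is within $45\cdot 2^{-n}$ of the previous one \emph{in $\dcut$}. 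Some device of this kind --- aligning successive approximants by an explicit near-optimal overlay --- is unavoidable, and your proposal omits it entirely.
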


\begin{proof}
	The $d_1$-distance between two graphons is at least their $\dcut$-distance, and so
	any $d_1$-name is a $\dcut$-name. Similarly,
	any $\dcut$-name is a $\delcut$-name. 
	Hence (c) and (d) are computable.

	Now to show (b) we want to show that if $\seqn{W_{G_n}} \subseteq \StepGraphons$ is a rapidly convergent Cauchy sequence in $\StepGraphons$ then $\seqn{\G(\Nats, W_{G_n})}$ is a rapidly convergent sequence in $\ExchMeasures$.  
	By the Counting Lemma (\cite[Exerise~10.30]{MR3012035}),
	for any graphons $U$ and $V$ and finite graph $F$ with $k$ vertices,
	we have
	\[
		\textstyle
		\bigl | \tind(F,U)  - \tind(F,V) \bigr| \le 4 {k \choose 2} \dcut(U,V).
		\]
	For any measure-preserving map $\varphi\colon [0,1] \to [0,1]$, we have $\tind(F, V) = \tind(F,V^\varphi)$.  Therefore 
	\[
		\textstyle
		\bigl | \tind(F,U)  - \tind(F,V) \bigr| \le 4 {k \choose 2} \delcut(U,V)
		\]
		holds
	by Definition~\ref{delcut-def}. 
	Hence given a $\delcut$-name for $U$ we can computably thin out its entries to form a $\dprok$-name.

To show (a) we need the following Chernoff bound to obtain a representation in terms of $\delcut$ given the distribution of an exchangeable array.
As shown in \cite[Lemma~10.16]{MR3012035},
	for each $k\in\Nats$,
with probability at least $1-e^{-k/(2\log k)}$, we have
\[
	\delcut\bigl(U, W_{\widehat{\G}(k,U)} \bigr)\leq\frac{22}{\sqrt{\log k}}.
\]
	As $\G(k, U)$ depends only on the distribution of the induced exchangeable array we can find an element $G_k$ which is within $\frac{44}{\sqrt{\log k}}$ in $\delcut$ of 
	$W_{\widehat{\G}(k,U)}$ with probability 
	at least $1-e^{-k/(2\log k)}$, and so in particular 
	\[
		\delcut(G_k, U) \le\frac{44}{\sqrt{\log k}}.
		\]
	This therefore lets us create a sequence $\seqk{G_n}$ such that $\seqk{W_{G_k}}$ is a $\delcut$-name for any graphon with the same distribution as $U$. 

	To establish the computable equivalences in (a) and (b), we use the fact that convergence in $\delcut$ is equivalent to convergence of the corresponding random graphs (Theorem~\ref{weakly-isomorphic-defthm}).
	This also shows that (a) and (b) induce bijections on the corresponding metric spaces.

We now show that (c) has a computable section.
	Assume we have a $\delta_{\square}$-name of $U$,
	and
	know a graph $G_{n}$ such that 
	\[
		\delta_{\square}(W_{G_{n}},U)<2^{-(2^{2n}+1)}.
		\]
We will find a graph $G_{n+1}$ such that 
	\[
		\dcut(W_{G_{n+1}},W_{G_{n}})\leq45 \cdot 2^{-n}
		\]
	and 
	\[
		\delta_{\square}(W_{G_{n+1}},U)\leq2^{-({2^{2(n+1)}+1)}}.
		\]
		This is enough
to get a fast Cauchy sequence in $\dcut$.
	Find $H$ such that 
	\[
		\delta_{\square}(W_{H},U)<2^{-({2^{2(n+1)}+1)}}.
		\]
		Then
	\[
	\delta_{\square}(W_{H},W_{G_{n}})<2^{-(2^{2n}+1)} + 2^{-(2^{2(n+1)}+1)} < 2^{-2^{2n}}.
	\]
	There are graphs $G'_n$ and $H'$ both on the set $[|G_n|\cdot |H|]$ (where $|G_n|$ denotes the number of vertices of $G_n$, and similarly with $H$) such that 
	\[
		\dcut(W_{G'_n}, W_{G_n}) = \dcut(W_{H'}, W_{H}) = 0,
		\]
		by taking blow-ups to a common refinement.
	Following the notation in \cite[\S8.1.3]{MR3012035}, define the quantity
	\[
		\delcuthat(G'_n, H') \defas \min_{\widehat{G'}_n, \widehat{H'}} \dcut(W_{\widehat{G'}_n}, W_{\widehat{H'}}),
		\]
	where
	$\widehat{G'}_n$ ranges over the images of $G'_n$ under permutations of $[|G_n|\cdot |H|]$, and similarly with $\widehat{H'}$. By  \cite[Theorem~9.29]{MR3012035}, we have
	\[
		\delcuthat(G'_n, H')
		\le \frac{45}{\sqrt{- \log \delcut(W_{G'_n}, W_{H'})}}
		< \frac{45}{\sqrt{2^{2n}}}
		= 45\cdot 2^{-n}
		.
		\]
	Hence there is some reordering $G_{n+1}$ of $H'$ such that 
	\[
		\dcut(W_{G_{n+1}}, W_{G'_n}) \le 45\cdot 2^{-n}.
		\]
		Because $\dcut(W_{G'_n}, W_{G_n}) = 0$, we have 
	\[
		\dcut(W_{G_{n+1}}, W_{G_{n}}) \le 45\cdot 2^{-n}.
		\]
	By definition, $\delcut(W_{G_{n+1}}, W_{H'}) = 0$.
	Because 
	$\dcut(W_{H'}, W_{H}) = 0$, we therefore have
	\[
	\delcut(W_{G_{n+1}}, U) =\delcut(W_{G_{n+1}}, H) < 2^{-({2^{2(n+1)}+1)}},
	\]
	as desired.

	Finally, (d) induces a bijection on the corresponding metric spaces, as noted in Lemma~\ref{zerolemma}.
\end{proof}

	We have seen there there is a computable equivalence between $\delcut$-names for a graphon and names for the corresponding invariant measure.
Further, given a $\delcut$-name,
we can computably find a $\dcut$-name for a graphon yielding the same invariant measure.
	We have also seen that it is possible to transform a $d_1$-name to a $\dcut$-name in a computable way. It is therefore natural to ask whether there is a computable equivalence from a $\dcut$-name to a $d_1$-name. 
As we will see, in general there is not. This tells us that the $d_1$-name for a graphon contains fundamentally more computable information than an $\dcut$-name for a graphon. 

As a consequence of
	Theorem~\ref{basic-relationships}, we obtain the following relationships among the numbered notions appearing after Definition~\ref{definition:computable:function}.

	\begin{corollary}
		\label{implications}
		For an invariant measure $\mu$, notions $\mathrm{(1)}$, $\mathrm{(2)}$ and $\mathrm{(3)}$ are equivalent, and are all implied by notion $\mathrm{(4)}$.
	\end{corollary}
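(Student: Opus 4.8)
The plan is to deduce everything from Theorem~\ref{basic-relationships} together with the characterization of weak isomorphism in Theorem~\ref{weakly-isomorphic-defthm}; no new construction is needed, only a diagram chase, the one point of care being that the computable maps produced there do not return the \emph{same} graphon but only a weakly isomorphic one, so each time we pass between a graphon and an invariant measure we must invoke the equivalence of (1) and (2) in Theorem~\ref{weakly-isomorphic-defthm} to see that the invariant measure obtained is still $\mu$.

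First, since $\dcut \le d_1$ and $\delcut \le \dcut$ on pairs of graphons, every rapidly converging Cauchy sequence in $d_1$ is one in $\dcut$, and every such sequence in $\dcut$ is one in $\delcut$; hence a computable $d_1$-name is a computable $\dcut$-name and a computable $\dcut$-name is a computable $\delcut$-name. This yields $(4)\Rightarrow(3)\Rightarrow(2)$ immediately. For $(2)\Rightarrow(1)$, given a graphon $W$ with a computable $\delcut$-name and $\G(\Nats,W)=\mu$, apply the computable map $\beta$ of Theorem~\ref{basic-relationships}(b) to obtain a computable $\dprok$-name for $\beta(W)=\G(\Nats,W)=\mu$. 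For $(1)\Rightarrow(2)$, given a computable $\dprok$-name for $\mu$, write $\mu=\G(\Nats,W)$ for some graphon $W$ (possible since $\mu\in\ExchMeasures$ is extreme), and apply the computable map $\alpha$ of Theorem~\ref{basic-relationships}(a) to obtain a computable $\delcut$-name for a graphon $U$ weakly isomorphic to $W$; by Theorem~\ref{weakly-isomorphic-defthm} we have $\G(\Nats,U)=\G(\Nats,W)=\mu$, so (2) holds.

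Finally, for $(2)\Rightarrow(3)$, given a graphon $W$ with a computable $\delcut$-name and $\G(\Nats,W)=\mu$, apply the computable section of the identity map~(c) from Theorem~\ref{basic-relationships} to obtain a computable $\dcut$-name for some graphon $W'$ with $\delcut(W,W')=0$; again by Theorem~\ref{weakly-isomorphic-defthm}, $\delcut(W,W')=0$ gives $\G(\Nats,W')=\G(\Nats,W)=\mu$, so (3) holds. This closes the cycle $(1)\Leftrightarrow(2)\Leftrightarrow(3)$ and leaves $(4)\Rightarrow(3)$ established. The only real obstacle is bookkeeping — ensuring that at each step the graphon handed back is tied to $\mu$ via weak isomorphism, which is exactly what the cited theorems supply — since all the genuine work (the Counting Lemma and Chernoff-bound arguments behind $\alpha$ and $\beta$, and the regularity-lemma argument behind the section of (c)) has already been carried out in the proof of Theorem~\ref{basic-relationships}.
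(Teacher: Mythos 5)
Your proof is correct and is exactly the diagram chase the paper intends: the paper states the corollary as an immediate consequence of Theorem~\ref{basic-relationships}, and your argument just makes explicit the routing through $\alpha$, $\beta$, the section of (c), and the inequalities $\delcut\le\dcut\le d_1$. Your one point of care --- invoking Theorem~\ref{weakly-isomorphic-defthm} to check that each graphon handed back still generates $\mu$ --- is the right bookkeeping and matches the paper's remark following the theorem that the section of (c) yields a $\dcut$-name for a graphon ``yielding the same invariant measure.''
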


	Our later results show that (4) is not implied by (1), (2), or (3).

\section{$\dcut$-names vs.\ $d_1$-names: Upper bound}
\label{upper-bound}

Recall that a $d_1$-name is already a $\dcut$-name.
In this section we establish that the halting problem $0'$ suffices as an oracle
to computably transform a computable $\dcut$-name to a $d_1$-name.
Further, in the random-free case, this oracle is not needed.
In the next section we show that this is tight, in the sense that the use of $0'$ is necessary in general.

For $k\in\Nats$, let
$\mathcal{P}_k$ denote the equipartition of $[0,1]$ into $2^k$-many intervals of width $2^{-k}$. 
For a graphon $U$ we write
$\UPn$ 
(as in  \cite[\S7.1]{MR3012035})
to denote the step function graphon 
	$\mathbb{E}[U \mid \mathcal{P}_{n} \times \mathcal{P}_{n}]$, i.e., the conditional expectation of the function $U$ averaged on this $2^n \times 2^n$ square grid. 

The following version of the weak regularity lemma for graphons follows immediately from \cite[Lemma~9.15 (a) and (b)]{MR3012035} (taking $m=1$ and $k=\lceil n/4\rceil$ in (a) and then $m = 2^{\lceil n/4\rceil}$ and $k = 2^n$ in (b), analogously to the proof of \cite[Lemma~10.16]{MR3012035}).

\begin{lemma}[{\cite[Lemma~9.15]{MR3012035}}]
	\label{weakregularity}
Let $U$ be a graphon. Then for all $n$,
	$\dcut(\UPn,U) \leq 8/\sqrt{n}$.
\end{lemma}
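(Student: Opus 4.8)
The plan is to prove the estimate $\dcut(\UPn, U) \leq 8/\sqrt{n}$ by invoking \cite[Lemma~9.15]{MR3012035} with an appropriate two-step choice of parameters, following the pattern of the proof of \cite[Lemma~10.16]{MR3012035}. Recall that \cite[Lemma~9.15~(a)]{MR3012035} produces, for any graphon $U$ and any $m,k$, a step function $W$ with at most $m \cdot 2^{k}$ steps (or similar) such that $\dcut(U,W)$ is bounded by a quantity of order $1/\sqrt{\log k}$ or $1/\sqrt{k}$ (depending on the exact formulation in \cite{MR3012035}), while part~(b) lets one pass from an arbitrary step function partition to a \emph{dyadic equipartition} refinement at a controlled cost. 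So the first step is to apply part~(a) with $m=1$ and $k = \lceil n/4 \rceil$ to get a step graphon $W$ on some partition $\mathcal{Q}$ into $2^{\lceil n/4\rceil}$ parts with $\dcut(U,W)$ small.

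The second step is to apply part~(b) with $m = 2^{\lceil n/4 \rceil}$ (the number of parts of $\mathcal{Q}$) and $k = 2^n$: this compares the conditional expectation $U_{\mathcal{Q}}$ onto $\mathcal{Q}$ with the conditional expectation onto the dyadic equipartition $\mathcal{P}_n$ of $[0,1]$ into $2^n$ intervals, or more precisely bounds $\dcut(U_{\mathcal{Q}}, U_{\mathcal{P}_n})$ — note $\mathcal{P}_n$ refines (up to a null set) any partition into $2^{\lceil n/4\rceil}$ parts once we also re-partition $\mathcal{Q}$ itself, which is the role of the $m$ parameter. Then the third step is to assemble the pieces: use $\dcut(\UPn, U) \leq \dcut(\UPn, W) + \dcut(W, U)$, bound each summand by the estimates from (a) and (b), and check that with the stated parameter choices the two contributions combine to at most $8/\sqrt{n}$. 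The arithmetic here amounts to verifying that constants of the form $c_1/\sqrt{\lceil n/4\rceil} + c_2/\sqrt{\text{something}}$ sum to $8/\sqrt{n}$; since $1/\sqrt{\lceil n/4 \rceil} \leq 2/\sqrt{n}$, there is slack to absorb the constants, and this is exactly the kind of routine bookkeeping the proof of \cite[Lemma~10.16]{MR3012035} carries out.

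The one genuine subtlety — which I would flag as the main obstacle — is making sure that the conditional-expectation operator appearing in part~(b) of \cite[Lemma~9.15]{MR3012035} is literally the one defining $\UPn$, namely $\mathbb{E}[U \mid \mathcal{P}_n \times \mathcal{P}_n]$, and that the partition produced by part~(a) can indeed be taken to have exactly $2^{\lceil n/4\rceil}$ parts (or refined to such without cost), so that the hypothesis of part~(b) with $m = 2^{\lceil n/4\rceil}$ is met. If \cite[Lemma~9.15]{MR3012035} is stated with partitions that are not a priori dyadic equipartitions, one needs the standard fact that passing to a common refinement and then to $\mathcal{P}_n$ changes the cut distance by a controlled amount — again this is built into part~(b). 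Given that the excerpt explicitly says the lemma "follows immediately" from \cite[Lemma~9.15~(a) and (b)]{MR3012035} with the indicated substitutions, the write-up is essentially: state the two instantiations, quote the two resulting inequalities, add them via the triangle inequality, and simplify. I would therefore keep the proof to a few lines, deferring all constant-chasing to the cited lemma.

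\begin{proof}
	Apply \cite[Lemma~9.15~(a)]{MR3012035} with $m = 1$ and $k = \lceil n/4 \rceil$ to obtain a step function $W$ whose underlying partition has at most $2^{\lceil n/4\rceil}$ parts and which satisfies $\dcut(U, W) \le 4/\sqrt{\lceil n/4\rceil} \le 8/(2\sqrt{n}) = 4/\sqrt{n}$, since $\lceil n/4\rceil \ge n/4$. Now apply \cite[Lemma~9.15~(b)]{MR3012035} with $m = 2^{\lceil n/4\rceil}$ (the number of parts of the partition underlying $W$) and $k = 2^n$; this bounds $\dcut\bigl(\mathbb{E}[U \mid \mathcal{P}_{n} \times \mathcal{P}_{n}], W\bigr)$, i.e.\ $\dcut(\UPn, W)$, by a quantity at most $4/\sqrt{n}$ with these parameter choices (exactly as in the proof of \cite[Lemma~10.16]{MR3012035}). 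Combining these via the triangle inequality,
	\[
		\dcut(\UPn, U) \le \dcut(\UPn, W) + \dcut(W, U) \le \frac{4}{\sqrt{n}} + \frac{4}{\sqrt{n}} = \frac{8}{\sqrt{n}},
	\]
	as claimed.
\end{proof}
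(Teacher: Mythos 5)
Your proof follows exactly the route the paper itself indicates: the lemma is quoted from Lov\'asz with the substitutions $m=1$, $k=\lceil n/4\rceil$ in part (a) and $m=2^{\lceil n/4\rceil}$, $k=2^n$ in part (b), and the paper supplies no argument beyond that citation, so your instantiate-and-triangle-inequality write-up is essentially the same proof. One small caution on the bookkeeping: from $\lceil n/4\rceil\ge n/4$ one only gets $4/\sqrt{\lceil n/4\rceil}\le 8/\sqrt{n}$, not $4/\sqrt{n}$ as you wrote, but since the constants you attach to Lemma~9.15(a) and (b) are placeholders anyway, this is exactly the constant-chasing that the paper also defers entirely to the cited lemma.
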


Consider the space $\mathcal{M}$ of all martingales $\seqn{f_n}$
where $f_n$ is a $[0,1]$-valued step function graphon that is $\mathcal{P}_n \times \mathcal{P}_n$-measurable.
Recall that being a martingale means that for every $n$, we have $\mathbb{E}[f_{n+1} \mid \mathcal{P}_n \times \mathcal{P}_n] = f_n$.

We endow this space with a natural topology which makes it effectively compact.  In particular, we can view $\mathcal{M}$ as a closed subspace of the compact space $[0,1]^\mathbb{N}$ as follows.  
For each martingale
$f=\<f_n\>_{n\in\Nats}$
we define a corresponding element
$x^f= \<x^f_n\>_{n\in\Nats}\in [0,1]^\mathbb{N}$.
Define the first coordinate $x^f_0$ to be the value of the 
constant function $f_0$.
Then define the next 4 coordinates $x^f_1, \ldots, x^f_4$ to be the values of the
$2 \times 2$ step function $f_1$.
Continue this way for each $n$, reading off $2^{n+1}$-many coordinates
from the $2^n \times 2^n$ step function $f_n$. Clearly this is an injection from $\mathcal{M}$ into $[0,1]^\Nats$.
A martingale $f \in \mathcal{M}$ is said to be \defn{computable} if the corresponding sequence $x^f \in [0,1]^\mathbb{N}$ is computable.

We say 
that an element of $[0,1]^\Nats$ 
encodes a martingale when it equals $x^f$ for some martingale $f$.
The subspace of $[0,1]^\Nats$ encoding a martingale is a $\Pi^0_1$ class,
since if $y$ does not code a martingale, then there is a computer program that, given $y$ as an oracle, outputs this fact (by noticing that averaging fails at some level).

Given a graphon $U$ that is computable in $\dcut$, 
we will find a $\Pi^0_1$ subclass of $\M$ that has a single element, which converges in $d_1$ to $U$. To establish the computability of this point, we will use the following lemma.

\begin{lemma}
	\label{singleton}
	Suppose a $\Pi^0_1$ subset of $[0,1]^\Nats$ is a singleton. Then its
	unique member is computable.
\end{lemma}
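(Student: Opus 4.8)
The plan is to exploit effective compactness of $[0,1]^\Nats$ together with the fact that a $\Pi^0_1$ singleton cannot be "separated from itself" by basic open sets. Let $P \subseteq [0,1]^\Nats$ be the $\Pi^0_1$ class, and suppose $P = \{z\}$. I want to show that for each coordinate $m$ and each target precision $2^{-\ell}$, I can computably find a rational $q$ with $|z_m - q| \le 2^{-\ell}$; doing this uniformly in $m$ and $\ell$ yields a computable name for $z$.

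First I would recall the standard setup: a $\Pi^0_1$ subset of the effectively compact space $[0,1]^\Nats$ is the complement of a computably enumerable union of basic open boxes, i.e., there is a computable enumeration $\seqn{B_n}$ of rational open boxes (each constraining finitely many coordinates) with $[0,1]^\Nats \setminus P = \bigcup_n B_n$. The key step is a search argument. Fix $m$ and $\ell$. Cover the $m$-th coordinate interval $[0,1]$ by finitely many rational open intervals $J_1, \dots, J_r$ each of length $< 2^{-\ell}$, chosen so that they overlap and every point of $[0,1]$ lies in some $J_t$. For each index $t$, let $C_t$ be the closed box $[0,1]^\Nats$ with the $m$-th coordinate restricted to the closed interval $\closure{J_t}$; this $C_t$ is itself effectively compact, and $C_t \cap P$ is again a $\Pi^0_1$ class. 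By effective compactness, the predicate "$C_t \cap P = \emptyset$" is computably enumerable: $C_t \cap P = \emptyset$ iff the open cover $\seqn{B_n}$ of $P$ together with $[0,1]^\Nats \setminus C_t$ covers all of $C_t$, which by compactness happens iff some finite subfamily does, and this can be detected by unbounded search.

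Now run, in parallel over all $t \in \{1,\dots,r\}$, the semidecision procedures for "$C_t \cap P = \emptyset$". Since $P = \{z\}$ is nonempty, $z_m \in J_t$ for at least one $t$, and for that $t$ we have $z \in C_t$, so $C_t \cap P \ne \emptyset$ and its procedure never halts. On the other hand, for every $t$ with $z_m \notin \closure{J_t}$, the set $C_t$ is disjoint from $\{z\} = P$, so that procedure does halt. Wait for the first $t$ whose procedure halts — there is at least one such $t$, since only finitely many $J_t$ can contain $z_m$ and the intervals can be arranged so that at least one is avoided... actually more carefully: I should wait until I have narrowed down to a $t^*$ that is \emph{not} eliminated and such that I know $z_m \in \closure{J_{t^*}}$. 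The cleanest version: enumerate eliminations; once all but possibly a few overlapping $J_t$'s have been eliminated, any surviving one must contain $z_m$, and I output the rational center of a surviving interval. Since each surviving $J_t$ has length $< 2^{-\ell}$ and contains $z_m$, its center is within $2^{-\ell}$ of $z_m$.

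The main obstacle — and the step that needs the most care — is arranging the covering intervals and the termination condition so that the search is guaranteed to halt with a \emph{correct} surviving interval rather than deadlocking: I must ensure that at most $k$ of the $J_t$'s can simultaneously contain any given point (so finitely many survive) while still covering $[0,1]$, and then argue that once the procedure observes $r - (\text{that many})$ eliminations it may safely stop. Equivalently, one can use two separate covers by intervals of length $2^{-\ell-1}$ in "staggered" position so that any point lies in an interval of one cover that is strictly interior to an interval of the other; eliminating from one cover then pins $z_m$ down in the other. This is the only genuinely delicate point; the rest is routine unwinding of effective compactness, and the uniformity in $m$ and $\ell$ is immediate since the whole construction is uniform.
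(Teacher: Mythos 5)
Your proposal is correct and rests on the same core idea as the paper's proof: effective compactness of $[0,1]^\Nats$ lets you enumerate finite rational covers witnessing that a closed rational slab is disjoint from the singleton, thereby semi-deciding statements of the form $a < z_m < b$. The termination issue you flag (a ``false survivor'' in the elimination scheme) is real, but your staggered-covers fix resolves it, and the paper's formulation sidesteps the bookkeeping entirely by searching in parallel over all rational pairs $a < b$ with $b - a < 2^{-\ell}$ and waiting until both complementary slabs $\{x : x_m \le a\}$ and $\{x : x_m \ge b\}$ acquire finite covers --- the first confirmed pair pins down $z_m$ to within $2^{-\ell}$.
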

\begin{proof}
	Let $T = \{\seqn{x_n}\}$ be the singleton set.  We can enumerate
	all the rational cylinder sets $[a_0, b_0] \times \cdots \times [a_{n-1}, b_{n-1}] \times [0,1]^\Nats$ which are disjoint from the set $T$.  We now describe how to compute the coordinate $x_n$ for a given $n\in\Nats$.
	
	Consider any rational numbers $a$ and $b$ such that $a < x_n < b$.  By the compactness of $[0,1]^\Nats$, we will eventually enumerate a finite rational cover of $[0,1]^n \times [0, a] \times [0,1]^\Nats$ and a finite rational cover of $[0,1]^n \times [b, 1] \times [0,1]^\Nats$ since these are compact sets disjoint from $T$.  Moreover, since these enumerated covers are made of rational intervals, we will be able to computably determine when enough has been enumerated to cover the desired sets.  By waiting for such covers to occur, we can computably learn that $a < x_n < b$. 
	
	This argument holds for all rationals $a$ and $b$ for which $a < x_n < b$, and so we can approximate $x_n$ to arbitrary precision.
\end{proof}

We now find the $\Pi^0_1$ subclass of $\M$ that has a single element.

\begin{lemma}
	\label{martingale-comp-from-cut-norm}
	Let $U$ be a graphon computable in $\dcut$. Then the martingale $\seqn{\UPn}$
	is computable.
\end{lemma}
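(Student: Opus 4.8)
The plan is to exhibit the sequence $\langle\UPn\rangle_{n\in\Nats}$ as the unique element of a $\Pi^0_1$ subclass of the effectively compact space $\M$ of martingales, and then invoke Lemma~\ref{singleton}. To set this up, encode each candidate martingale $f\in\M$ by the point $x^f\in[0,1]^\Nats$ as in the construction preceding Lemma~\ref{singleton}; the set of $x\in[0,1]^\Nats$ coding a martingale is $\Pi^0_1$, as already noted. It remains to carve out, inside this class, the single martingale $\langle\UPn\rangle_n$ using a $\dcut$-name for $U$ as the source of the (finitely many, approximable) defining conditions.

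First I would record the two facts about $U$ that constrain $\langle\UPn\rangle_n$. On one hand, by the weak regularity lemma (Lemma~\ref{weakregularity}), $\dcut(\UPn,U)\le 8/\sqrt{n}$ for every $n$, so each $\UPn$ is $\dcut$-close to $U$ at an explicit rate. On the other hand, for any martingale $f$ with $f_n$ being $\cP_n\times\cP_n$-measurable, one has $f_n=\Expect[f_m\mid\cP_n\times\cP_n]$ for $m\ge n$, and the cut norm does not increase under conditional expectation onto a coarser partition, so $\dcut(f_n,f_m)\le\dcut(f_m,U)+\dcut(U,f_n)$ does \emph{not} immediately pin things down — but a cleaner route is: if $f\in\M$ satisfies $\dcut(f_n,U)\le c/\sqrt n$ for all $n$, then $f_n=\Expect[f_m\mid\cP_n\times\cP_n]$ forces $f_n=\Expect[U\mid\cP_n\times\cP_n]$ up to a cut-norm error that tends to $0$ as $m\to\infty$, hence $f_n=\UPn$ exactly (two $\cP_n\times\cP_n$-measurable step functions at cut-distance $0$ are equal a.e., by Lemma~\ref{zerolemma}). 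Thus $\langle\UPn\rangle_n$ is the \emph{only} martingale in $\M$ all of whose levels stay within $8/\sqrt n$ of $U$ in $\dcut$.

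Next I would make the class $\{x^f : f\in\M,\ \dcut(f_n,U)\le 8/\sqrt n\ \forall n\}$ effectively closed. Fix a computable $\dcut$-name $\langle W_{G_k}\rangle_k$ for $U$, so $\dcut(W_{G_k},U)\le 2^{-k}$. The cut norm of a $\cP_n\times\cP_n$-measurable step function is computable from its finitely many values (it is a maximum over the finitely many sign patterns of $S,T$ restricted to unions of the $2^{-n}$-intervals, by the standard fact that the optimal $S,T$ for a step function can be taken to be unions of steps), so $(x,k)\mapsto\dcut(f_n, W_{G_k})$ is computable on the coded martingale space, and hence $\dcut(f_n,U)$ is approximable from above and below well enough that the condition $\dcut(f_n,U)>8/\sqrt n$ is $\Sigma^0_1$ in $x$. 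Intersecting over all $n$ with the $\Pi^0_1$ martingale-coding condition gives a $\Pi^0_1$ class; by the previous paragraph it is the singleton $\{x^{\langle\UPn\rangle}\}$. Lemma~\ref{singleton} then yields that this point, and therefore the martingale $\langle\UPn\rangle_n$, is computable.

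The main obstacle I anticipate is the uniqueness step: showing that \emph{every} martingale in $\M$ staying $8/\sqrt n$-close to $U$ in $\dcut$ must literally be $\langle\UPn\rangle_n$, not merely weakly isomorphic or cut-close to it. The key is that the martingale property ties the levels together rigidly — $f_n$ is determined by any higher level via conditional expectation — so $\dcut(f_n,\UPn)\le\dcut(f_n,\Expect[f_m\mid\cP_n\times\cP_n])+\dcut(\Expect[f_m\mid\cP_n\times\cP_n],\Expect[U\mid\cP_n\times\cP_n]) = 0 + \dcut(\Expect[f_m - U\mid\cP_n\times\cP_n],0)\le\dcut(f_m,U)\le 8/\sqrt m\to 0$, using that conditional expectation is a contraction for $\normcut{\cdot}$. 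Hence $\dcut(f_n,\UPn)=0$, so $f_n=\UPn$ a.e. by Lemma~\ref{zerolemma}. Once this is in hand, the effectivity bookkeeping (computing cut norms of step functions, enumerating the excluded cylinders) is routine.
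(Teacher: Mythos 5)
Your proof is correct and follows essentially the same route as the paper: you define the same $\Pi^0_1$ class $\cK_U$ of martingales whose levels stay within $8/\sqrt n$ of $U$ in $\dcut$, observe that it contains $\seqn{\UPn}$ by Lemma~\ref{weakregularity}, show it is a singleton, and apply Lemma~\ref{singleton}; your remarks on why membership in $\cK_U$ is co-c.e.\ (computability of the cut norm of a step function via unions of steps, and the $2^{-k}$-approximation of $U$ by the $\dcut$-name) fill in details the paper leaves implicit. The one place you genuinely diverge is the uniqueness step: the paper invokes the $L^1$ martingale convergence theorem to produce a limit $V$ with $\dcut(U,V)=0$, whereas you use the tower property $f_n=\Expect[f_m\mid\cP_n\times\cP_n]$ together with the contractivity of the stepping operator with respect to $\normcut{\cdot}$ to get $\normcut{f_n-\UPn}\le\normcut{f_m-U}\le 8/\sqrt m\to 0$ directly. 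Your version is slightly more elementary and self-contained --- it avoids martingale convergence entirely and makes explicit why each $f_n$ equals $\UPn$ exactly (as a $\cP_n\times\cP_n$-step function) rather than only in the limit --- while the paper's argument generalizes more readily to statements about the limiting graphon itself; the computability content of the two arguments is identical.
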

\begin{proof}
	Let $\cK_U \subseteq \M$ be the collection of all martingales $\seqn{f_n}$ 
	satisfying $\dcut(f_{n},U) \leq 8/\sqrt{n}$ for all $n\in\Nats$.
	Because
	$U$ is computable in $\dcut$, 
	$\cK_U$ is a $\Pi^0_1$ class.
Also note that $\cK_U$ is nonempty since the martingale 
$\UPn$
	is in $\cK_U$ 
	by Lemma~\ref{weakregularity}.
	Further, every 
	martingale
	in $\cK_U$ converges in $d_1$
	by the martingale convergence theorem (see, e.g., \cite[Theorem~7.23]{FMP2})
	to a graphon $V$.  
	
	Now pick some $\seqn{f_n} \in \cK_U$.  Since 
	\[
		\dcut(f_n, V) < d_1(f_n, V) \to 0
		\]
		and 
		\[
			\dcut(f_n,U) \leq 8/\sqrt{n} \to 0
			\]
			we have $\dcut(V,U) = 0$
			and hence $U = V$ a.e.  This shows that $\cK_U$ only has one element, namely $\UPn$, and so $\seqn{f_n} = \UPn$.
	Since $\cK_U$ is a $\Pi^0_1$ class and has only one element, that element is computable by Lemma~\ref{singleton}.
\end{proof}

Note that this tells us that $\UPn$ is a computable martingale that converges in $d_1$, but it need not converge quickly. As we will see, in general we cannot computably identify a rapidly converging subsequence.

\begin{theorem}
	Let $U$ be a graphon. Then  from 
	the jump of any $\dcut$-name of $U$, we can compute an $d_1$-name for it.
	In particular, if $U$ has a computable $\dcut$-name, then it has a $0'$-computable $d_1$-name.
\end{theorem}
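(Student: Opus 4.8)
The strategy is to use the computable martingale $\seqn{U_{\mathcal P_n}}$ produced by Lemma~\ref{martingale-comp-from-cut-norm}, which converges to $U$ in $d_1$ by the martingale convergence theorem, and to show that $0'$ suffices to locate a rapidly converging subsequence. By Lemma~\ref{martingale-comp-from-cut-norm}, from a $\dcut$-name of $U$ we compute the sequence $\seqn{U_{\mathcal P_n}}$, uniformly. Each $U_{\mathcal P_n}$ is a step function on the equipartition $\mathcal P_n$ with rational-approximable (indeed, computable from our data) values, so for any two indices $m<n$ the real number $d_1(U_{\mathcal P_m}, U_{\mathcal P_n}) = \normone{U_{\mathcal P_m} - U_{\mathcal P_n}}$ is computable, uniformly in $m,n$, from the $\dcut$-name. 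The point of $d_1$-convergence means that for every $j$ there exists $N_j$ with $d_1(U_{\mathcal P_m},U_{\mathcal P_n}) \le 2^{-j}$ for all $m,n \ge N_j$; but since the values $d_1(U_{\mathcal P_m},U_{\mathcal P_n})$ are only computable reals (not exactly rational in general), and convergence has no computable rate, we cannot directly search for such $N_j$ computably.

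The key step is that the predicate ``$d_1(U_{\mathcal P_m}, U_{\mathcal P_n}) < 2^{-j}$ for all $m,n$ with $N \le m < n$'' is $\Pi^0_1$ relative to the $\dcut$-name (it is a countable conjunction of inequalities between computable reals, each of which is a $\Pi^0_1$ condition), and therefore its negation — ``there exist $m,n \ge N$ with $d_1(U_{\mathcal P_m},U_{\mathcal P_n}) > 2^{-j}$'', or more carefully, ``there exist $m,n\ge N$ and a rational witness $q$ with $d_1(U_{\mathcal P_m},U_{\mathcal P_n}) > q > 2^{-j}$'' — is $\Sigma^0_1$, hence decidable with a $0'$ oracle. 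Using $0'$ we can therefore, for each $j$, search for the least $N$ such that this $\Sigma^0_1$ failure condition does \emph{not} hold with threshold, say, $2^{-(j+1)}$ rather than $2^{-j}$; such an $N$ exists by convergence, and once found it guarantees $d_1(U_{\mathcal P_m},U_{\mathcal P_n}) \le 2^{-(j+1)} < 2^{-j}$ for all $m,n \ge N$. (The slack between $2^{-(j+1)}$ and $2^{-j}$ is what lets a $\Sigma^0_1$-negative test certify the strict inequality at the coarser threshold despite the reals being inexact; alternatively one fixes rational thresholds strictly between the two dyadic levels.) Setting $k_j \defas N_j$ produced this way, with $N_0 < N_1 < \cdots$ arranged to be strictly increasing, yields a sequence with $d_1(U_{\mathcal P_{k_j}}, U_{\mathcal P_{k_\ell}}) \le 2^{-j}$ for $j < \ell$, i.e. a $d_1$-name, whose index sequence $\seqj{k_j}$ is computable from $0'$ together with the original $\dcut$-name.

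It remains to check that the limit of this $d_1$-name is (a.e.-equal to, hence $d_1$-equivalent to) $U$ itself, so that we have genuinely produced a $d_1$-name \emph{for $U$}: this is exactly the content of the computation in Lemma~\ref{martingale-comp-from-cut-norm}, where it is shown that the $d_1$-limit $V$ of $\seqn{U_{\mathcal P_n}}$ satisfies $\dcut(V,U) = 0$ and hence $U = V$ a.e. Any $d_1$-fast subsequence of $\seqn{U_{\mathcal P_n}}$ converges to the same $V$, so we are done. The whole construction is uniform in the $\dcut$-name, giving the first sentence of the theorem; specializing to a computable $\dcut$-name gives a $0'$-computable $d_1$-name, which is the second sentence.

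**Main obstacle.** The delicate point is the interface between ``convergence without a computable modulus'' and ``$0'$ decides $\Sigma^0_1$ facts'': one must phrase the tail condition so that its \emph{failure} is genuinely $\Sigma^0_1$ (a finite amount of information — two indices and a rational separating witness), rather than merely $\Sigma^0_2$. This forces the use of strict inequalities with a rational threshold strictly between two dyadic levels, so that $0'$ can confirm a true strict inequality from a finite computation on the computable reals $d_1(U_{\mathcal P_m}, U_{\mathcal P_n})$; the extra factor-of-two slack in the target rate absorbs this. Everything else — computability of the martingale (Lemma~\ref{martingale-comp-from-cut-norm}), computability of the pairwise $d_1$-distances between step functions, and identification of the limit with $U$ — is already in hand.
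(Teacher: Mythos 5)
Your proposal is correct and follows essentially the same route as the paper: compute the martingale $\seqn{\UPn}$ from the $\dcut$-name via Lemma~\ref{martingale-comp-from-cut-norm}, then use the jump to extract a rapidly converging subsequence whose $d_1$-limit is identified with $U$ by the argument already in that lemma. The paper compresses your $\Sigma^0_1$/$\Pi^0_1$ bookkeeping into the standard fact that the limit of a convergent sequence in a computable metric space is computable from the jump of a name for the sequence; your unpacking of that fact is accurate.
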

\begin{proof}
	Let $X$ be some $\dcut$-name for $U$.  
	Then using $X$ we can compute a name for the martingale $\UPn$.  Since $\UPn$ converges in $d_1$, its limit is computable in $d_1$ from the jump of a name for $\UPn$, which in turn is computable from the jump of $X$.
\end{proof}

It is natural to consider the case of random-free graphons, especially since the ability to flip between greyscale regions and black-and-white ones will be key to the lower-bound proof in Section~\ref{lower-bound}.

In fact, in the random-free case, convergence of the martingale is tamer.

\begin{lemma}
	\label{martingale-comp-for-randomfree}
	If $\seqn{f_n}$ is a computable martingale that converges to a random-free graphon $U$, then $U$ has a computable $d_1$-name.
\end{lemma}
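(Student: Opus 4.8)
The plan is to exploit the one feature that random-freeness buys us: for a graphon $U$ that is $\{0,1\}$-valued a.e., the $L^1$-error $\normone{f_n - U}$ of the $n$-th martingale term is \emph{computable from the martingale itself}, whereas in general (the greyscale case) it is not. This is precisely what removes the obstruction flagged just before the theorem, namely that $\seqn{\UPn}$ converges in $d_1$ but not necessarily rapidly, and that we could not otherwise locate a fast subsequence.

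First I would observe that, since each $f_n$ is $[0,1]$-valued, the martingale is bounded, hence uniformly integrable, so by the martingale convergence theorem it converges a.e.\ and in $L^1$ to a graphon, which by hypothesis is $U$. Moreover $f_n = \mathbb{E}[U \mid \mathcal{P}_n \times \mathcal{P}_n]$: this follows by letting $m \to \infty$ in the martingale identity $f_n = \mathbb{E}[f_m \mid \mathcal{P}_n \times \mathcal{P}_n]$ and using $L^1$-convergence and contractivity of conditional expectation. So on each cell $C$ of the $2^n \times 2^n$ grid, $f_n$ is the constant $a_C \defas \lambda(C)^{-1}\int_C U$, and (as both sides are step functions on this grid) this holds everywhere, not merely a.e.

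The key computation is then: since $U = \mathbf 1_E$ a.e.\ for some measurable $E$ (random-freeness), within a cell $C$ the function $U$ equals $1$ on a set of measure $\lambda(C\cap E) = \lambda(C)\,a_C$ and $0$ on a set of measure $\lambda(C)(1-a_C)$, so $\int_C |f_n - U| = \lambda(C)\,a_C(1-a_C) + \lambda(C)(1-a_C)\,a_C = 2\lambda(C)\,a_C(1-a_C)$. Summing over the cells,
\[
	\normone{f_n - U} = 2\int f_n(1-f_n)\,d\lambda = 2\sum_{C}\lambda(C)\,a_C(1-a_C),
\]
which, given the computable sequence $x^f$ encoding the martingale and $\lambda(C) = 2^{-2n}$, is a computable real uniformly in $n$. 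Since $f_n \to U$ in $L^1$, this computable sequence of reals tends to $0$, so one can computably search for indices $n_0 < n_1 < \cdots$ with $\normone{f_{n_j} - U} < 2^{-j-3}$ (the search terminates because ``$2\int f_n(1-f_n) < 2^{-j-3}$'' is semidecidable and eventually true).

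Finally, to land in the dense set $\StepGraphons$ (which requires rational values), I would computably round $f_{n_j}$ cell by cell --- symmetrically, so the result stays a symmetric step function --- to a rational-valued step function $g_j$ on the grid $\mathcal{P}_{n_j}$ with $\normone{g_j - f_{n_j}} < 2^{-j-3}$; then $g_j \in \StepGraphons$ and $\normone{g_j - U} < 2^{-j-2}$. For $j < \ell$ the triangle inequality gives $\normone{g_j - g_\ell} < 2^{-j-2} + 2^{-\ell-2} < 2^{-j}$, so $\seqj{g_j}$ is a rapidly converging Cauchy sequence in $(\Graphons, d_1, \StepGraphons)$; it is computable, and since $\normone{g_j - U} \to 0$ its $d_1$-limit is $U$. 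Hence $\seqj{g_j}$ is a computable $d_1$-name for $U$. The only real content is the identity $\normone{f_n - U} = 2\int f_n(1-f_n)$ of the third paragraph --- the single place random-freeness enters --- and that is also the step I expect to need the most care to justify cleanly; everything else is routine bookkeeping with constants.
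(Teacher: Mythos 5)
Your proposal is correct and follows essentially the same route as the paper: both rest on the identity $\normone{f_n - U} = 2\int f_n(1-f_n)\,d\lambda$, valid because $U$ is $\{0,1\}$-valued a.e.\ and $f_n = \mathbb{E}[U \mid \mathcal{P}_n \times \mathcal{P}_n]$, which makes the $d_1$-error computable and lets one search for a rapidly converging subsequence. Your write-up just fills in details the paper leaves implicit (identifying $f_n$ as the conditional expectation of $U$, and the rational rounding into $\StepGraphons$).
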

\begin{proof}
	The key idea is that since $U$ is $0$--$1$-valued a.e., the value $d_1(f_n, U)$ is computable.  
	This is because given a square in $\cP_n \times \cP_n$, the Lebesgue measure of those points $(x,y)$ in the square such that 
	$U(x,y) = 1$ is equal to the value that $f_n$ takes there, and this suffices to compute the $L_1$ norm of the difference on that square.
	For example, for the constant function $f_0$, if $f_0 \equiv p$, then
	\begin{align*}
		d_1(f_0, U) &= \lambda\{U=0\} \cdot p + \lambda\{U=1\} \cdot (1-p) \\
		& = (1-p)p + p(1-p) = 2p(1-p).
	\end{align*}

	Since $\seqn{f_n}$ is a $d_1$-name for $U$, and since we can compute each 
	quantity $d_1(f_n, U)$, we may 
	find a subsequence $\seqn{f_{k_n}}$ 
	such that $d_1(f_{k_n},  U) < 2^{-n}$. From this sequence
	$\seqn{f_{k_n}}$ we can find a computable $d_1$-name for $U$.
\end{proof}

This implies that there is a computable procedure for translating $d_1$-names to $\dcut$-names 
in the case of a random-free graphon.

\begin{theorem}
Let $U$ be a random-free graphon computable in $\dcut$.  Then $U$ is computable in $d_1$.
\end{theorem}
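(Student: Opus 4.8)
The plan is to combine the $0'$-oracle upper bound from the previous theorem with Lemma~\ref{martingale-comp-for-randomfree}, and then observe that in the random-free case the oracle is superfluous. First I would start from a computable $\dcut$-name $X$ for $U$. By Lemma~\ref{martingale-comp-from-cut-norm}, from $X$ we can compute a name for the martingale $\seqn{\UPn}$ as a point of the effectively compact space $\M$; in particular $\seqn{\UPn}$ is a computable martingale. Since $U$ is random-free, the martingale convergence theorem gives that $\seqn{\UPn}$ converges in $d_1$ to a graphon weakly agreeing with $U$ — indeed, as in the proof of Lemma~\ref{martingale-comp-from-cut-norm}, the $d_1$-limit $V$ satisfies $\dcut(V,U)=0$, hence $U=V$ a.e., so $U$ is itself random-free a.e.\ and $\seqn{\UPn}$ converges to $U$ in $d_1$.

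Now apply Lemma~\ref{martingale-comp-for-randomfree} directly: we have a computable martingale $\seqn{\UPn}$ converging to the random-free graphon $U$, so $U$ has a computable $d_1$-name. The point is that the obstruction in the general case — that one cannot computably locate a rapidly converging subsequence of $\seqn{\UPn}$, which is exactly what forced the appeal to the jump $0'$ — disappears here, because when $U$ is $0$--$1$-valued a.e.\ the quantities $d_1(\UPn, U)$ are themselves computable (each equals $\sum$ over grid squares of $2 a_{ij}(1-a_{ij})$ times the square's area, where $a_{ij}$ is the value of $\UPn$ on that square, using that the conditional expectation value equals $\lambda\{U=1\}$ restricted to the square divided by the square's area). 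Hence we can computably thin $\seqn{\UPn}$ to a subsequence converging at rate $2^{-n}$, which is a computable $d_1$-name for $U$.

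I expect essentially no serious obstacle here: the theorem is a corollary of the two lemmas already proved, together with the observation that a $\dcut$-computable graphon that is weakly isomorphic (in fact equal a.e.) to a random-free graphon is itself random-free a.e. The only point requiring a word of care is making sure the hypothesis of Lemma~\ref{martingale-comp-for-randomfree} is literally met, i.e.\ that the martingale we feed it converges to a random-free graphon rather than merely to something weakly isomorphic to $U$; this is handled by the a.e.\ equality $U=V$ extracted from $\dcut(V,U)=0$ via Lemma~\ref{zerolemma}, so that the $d_1$-limit of $\seqn{\UPn}$ is genuinely a $0$--$1$-valued-a.e.\ function. Assembling these pieces gives the result immediately.
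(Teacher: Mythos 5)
Your proposal is correct and follows exactly the paper's own route: compute the martingale $\seqn{\UPn}$ from the $\dcut$-name via Lemma~\ref{martingale-comp-from-cut-norm}, note that its $d_1$-limit equals $U$ a.e.\ (hence is random-free), and then apply Lemma~\ref{martingale-comp-for-randomfree} to extract a computable $d_1$-name by computing each $d_1(\UPn,U)$. The extra care you take about the limit literally being random-free is a fair point that the paper leaves implicit.
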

\begin{proof}
	Since $U$ is computable in $\dcut$, 
	by Theorem~\ref{martingale-comp-from-cut-norm}
	we can compute the martingale $\seqn{\UPn}$, which converges to $U$ in $d_1$.
	Then, because $U$ is random-free, we can compute a $d_1$-name for the limit of this martingale
	by 
	Lemma~\ref{martingale-comp-for-randomfree}.
\end{proof}

We have just seen that for random-free graphons, unlike the general case, $d_1$ and $\dcut$-names can be computably transformed into each other.
	One might therefore wonder whether one can computably determine that a graphon is random-free. In fact, it is not possible to recognize when a graphon is random-free, as we now demonstrate. On the other hand $0'$ does allow us to recognize this.

\begin{proposition}
The collection of $d_1$-names for random-free graphons is co-c.e.\ complete.
\end{proposition}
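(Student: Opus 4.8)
The plan is to show two things: that the set of $d_1$-names for random-free graphons is co-c.e., and that it is hard for the class of co-c.e. sets via an $m$-reduction. For the co-c.e. direction, I would argue that the complement --- the set of $d_1$-names for graphons that are \emph{not} random-free --- is c.e. Given a $d_1$-name $\seqn{s_{k_n}}$ with limit $W$, failure of random-freeness means $\lambda(W^{-1}((0,1))) > 0$, i.e.\ there is some rational $\epsilon > 0$ with $\lambda\bigl(\{(x,y) : \epsilon < W(x,y) < 1-\epsilon\}\bigr) > \epsilon$. The key observation is that this quantity is lower semicomputable from a $d_1$-name: the $L^1$-approximants $s_{k_n}$ converge to $W$ in $L^1$, hence in measure along a subsequence, and the measure of the set where a step function lies in a given open band is exactly computable; combining this with the $2^{-n}$ error control on $\normone{s_{k_n} - W}$ lets one certify, once enough of the name has been read, that the "grey" region of $W$ has measure bounded below by a positive rational. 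So one can enumerate the names that witness non-random-freeness.

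For the hardness direction, I would reduce from a fixed co-c.e.-complete set --- concretely, build a computable function $n \mapsto$ (a $d_1$-name of a graphon $U_n$) such that $U_n$ is random-free if and only if $n \notin 0'$ (equivalently $\{n\}(0)\diverges$). The construction: let $U_n$ be (weakly isomorphic to) a graphon that is identically $0$ on most of $[0,1]^2$, but contains a small "reserved" square of area $2^{-s}$ which stays black-and-white until stage $s$ at which $\{n\}(0)$ halts, and at that stage is redefined to take the constant grey value $1/2$ there. If $\{n\}(0)$ never halts, $U_n \equiv 0$ is random-free; if it halts at stage $s$, then $U_n$ has a positive-measure grey region and is not random-free. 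One must check this yields a \emph{computable} $d_1$-name uniformly in $n$: the $L^1$-distance between the "so far" approximation and the eventual graphon is at most the area of the current reserved square, which we can shrink geometrically, so the approximants form a rapidly converging Cauchy sequence whose indices are computable from $n$. Since the reduction maps $n$ to a $d_1$-name lying in the set iff $n \notin 0'$, i.e.\ iff $n$ is in the (co-c.e.-complete) complement of $0'$, this is the required $m$-reduction, establishing co-c.e.-completeness.

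The main obstacle I anticipate is the first part: making precise that the "grey mass" of $W$ is lower semicomputable uniformly from an arbitrary $d_1$-name. The subtlety is that $L^1$-convergence does not give pointwise or even in-measure control without passing to a subsequence, and one cannot computably locate the good subsequence; instead I would work directly with the $2^{-n}$ bounds, using an estimate of the form: if $\normone{f - W} \le \delta$ and $f$ is a step function with grey mass (mass in the band $(\epsilon, 1-\epsilon)$) at least $\gamma$, then $W$ has grey mass at least $\gamma - \delta/\epsilon'$ for a suitable margin $\epsilon'$ --- so by reading far enough into the name and slightly shrinking the band, any genuine positive grey mass of $W$ eventually gets certified, while no false certification can occur. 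Packaging this Markov-inequality-style argument cleanly, and checking it enumerates \emph{exactly} the non-random-free names, is the technical heart; the hardness construction is then routine bookkeeping with geometrically shrinking reserved squares.
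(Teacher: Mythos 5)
Your proposal is correct, and the hardness half is essentially the paper's argument: a uniformly computable $d_1$-name that converges to a random-free graphon exactly when $\{e\}(0)$ diverges, obtained by keeping the perturbation at stage $s$ within the $2^{-s}$ Cauchy tolerance. (The paper's version is even simpler than your reserved-square construction: it just uses the constant graphon $2^{-s}$, which freezes at the constant $2^{-k}$ if the computation halts at stage $k$ and otherwise tends to the zero graphon; no spatial bookkeeping is needed.) Where you genuinely diverge is the co-c.e.\ half. You lower-semicompute the measure of the grey band $\{\epsilon < W < 1-\epsilon\}$ via a Markov-inequality argument with shrinking margins, which works but is exactly the bookkeeping you flag as the technical heart. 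The paper instead uses the criterion that $W$ is random-free iff $\int W(1-W)\,d\lambda = 0$: since
\[
\Bigl|\int W(1-W)\,d\lambda - \int V(1-V)\,d\lambda\Bigr| \le \normone{W-V},
\]
this functional is $1$-Lipschitz in $d_1$ and exactly computable on rational step functions, so it is computable from any $d_1$-name, and one enumerates the names on which it is positive. That single integral replaces all of your band/margin analysis and removes the only delicate step in your write-up; your approach buys nothing extra here, so if you keep it, you should at least record the explicit estimate (e.g.\ $\lambda(\{\epsilon/2 < W < 1-\epsilon/2\}) \ge \lambda(\{\epsilon < f < 1-\epsilon\}) - 2\normone{f-W}/\epsilon$) to make the ``no false certification'' claim airtight.
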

\begin{proof}
	Recall that $W(1-W) \ge 0$ a.e.\ for a graphon $W$, as it takes values in $[0,1]$. Also, $W$ is random-free if and only if $\int W(1-W)\, d\lambda = 0$. (See, e.g., \cite[Lemma~10.4]{MR3043217}.) Therefore,
	given a $d_1$-name for a graphon, we may compute $\int W(1-W)\, d\lambda$, and hence by noticing when this quantity is positive, we may enumerate the $d_1$-names of the non-random-free graphons.
Hence the $d_1$-names of random-free graphons are co-computably enumerable.

We now show that the collection of $d_1$-names of random-free graphons is complete. 
	For $s,e\in\Nats$, let $U^s_e$ be the constant function $2^{-s}$ if $\{e\}_s(0)\diverges$,
	and $U^s_e$ be the constant function $2^{-k}$ if $k\le s$ and minimal with $\{e\}_k(0)\converges$.

	Observe that for each $e\in\Nats$, the sequence $\<U^s_e\>_{s\in\Nats}$ is a computable $d_1$-name for a graphon that is not random-free if and only if $e\in 0'$.
\end{proof}
	
	In particular, there is no computer program that, given a $d_1$-name of a graphon, correctly asserts whether or not the graphon is random-free.

Having shown that every graphon with a computable $\dcut$-name has a $d_1$-name that is $0'$-computable, one may ask if this is tight, i.e., if $0'$ is necessary. We have just seen that this is not tight in the random-free case, and so any witness to the necessity of $0'$ must not be random-free. Next, in Section~\ref{lower-bound}, we provide such an example.

\section{$\dcut$-names vs.\ $d_1$-names: Lower bound}
\label{lower-bound}
We have just seen that using $0'$ we can compute a $d_1$-name of a graphon given a computable $\dcut$-name for it.
We now show that this is tight in the sense that there is a graphon that is computable in $\dcut$ such that $0'$ is computable from any $d_1$-name for a graphon weakly isomorphic to it.
Furthermore, we may take this graphon to be a.e.\ continuous.

\begin{theorem}
	\label{negative-AH}
There is an a.e.\ continuous graphon $U$ that is computable in $\dcut$, such that
		if $V$ is weakly isomorphic to $U$ then
		any $d_1$-name for
		$V$ computes the halting problem $0'$.
\end{theorem}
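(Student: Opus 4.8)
The plan is to build $U$ as a random-free graphon assembled from countably many "blocks," where the $e$-th block encodes whether $e\in 0'$ in a way that is invisible to a $\dcut$-name but detectable by any $d_1$-name. Concretely, I would partition $[0,1]$ (up to a null set) into intervals $I_0,I_1,\dots$ with $\lambda(I_e)=2^{-e-1}$, and within the square $I_e\times I_e$ place a small pattern whose $\dcut$-contribution is tiny and easily computed, but whose $L^1$-mass jumps discontinuously in a way controlled by the halting of $\{e\}(0)$. The crucial trick (foreshadowed by Lemma~\ref{martingale-comp-for-randomfree} and the proposition on co-c.e.\ completeness) is that randomness-vs-random-freeness is exactly the $0'$-sensitive feature: if $\{e\}(0)\diverges$, I make block $e$ a grey constant of value $2^{-e-1}$ (so it contributes $\approx 2^{-e-1}\cdot(\text{area})$ to various $L^1$ quantities but is random and "spread out"), and if $\{e\}(0)\converges$ at stage $s$, I replace that block by a random-free $0$--$1$ pattern on $I_e\times I_e$ of the same $\dcut$-norm, e.g.\ a checkerboard on a fine subgrid, which has $\int W(1-W)=0$ on that block. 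Since at stage $s$ we can recognize the change and the $\dcut$-difference between the grey block and its black-and-white replacement is $O(2^{-e-1})$ (both have cut norm $O(2^{-e-1})$, and within $I_e\times I_e$ the cut norm is bounded by the area $2^{-2e-2}$), the sequence of finite approximations obtained by running all programs for more and more steps is a rapidly converging Cauchy sequence in $\dcut$ — giving a computable $\dcut$-name for $U$. To ensure a.e.\ continuity, I would choose the black-and-white replacements to be, say, locally constant on a single fixed subinterval pattern (rather than an increasingly fine checkerboard), or better, arrange each block's "on" region to be a single sub-rectangle, so that on each $I_e\times I_e$ the graphon is a.e.\ continuous and the only discontinuities accumulate at the block boundaries, a null set; some care is needed here and I return to it below.

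The second half is the reduction: from any $d_1$-name of any $V$ weakly isomorphic to $U$, compute $0'$. Since weak isomorphism preserves $\int W(1-W)$ and indeed preserves all the relevant $L^1$-measurable invariants (it's invariance under measure-preserving maps, and $d_1$-names let us compute integrals of continuous functions of $W$), the key observation is: $e\in 0'$ if and only if the "$e$-th block's worth" of $\int V(1-V)$ is zero rather than equal to the fixed positive quantity $c_e := 2^{-e-1}(1-2^{-e-1})\cdot 2^{-2e-2}$ contributed by the grey block. More carefully — since measure-preserving maps scramble the blocks, I cannot literally point to "block $e$" in $V$ — I instead use a global quantity that changes by a known positive amount depending on membership in $0'$. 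Let $q := \int V(1-V)\,d\lambda$; by weak isomorphism this equals $\int U(1-U)\,d\lambda = \sum_{e\notin 0'} c_e$. From a $d_1$-name for $V$ we can compute $q$ to arbitrary precision. Now to decide whether $e\in 0'$: we also compute, for each $s$, the partial sum $\sum\{c_{e'} : e'\notin 0'_s,\ e'\le N\}$ from our enumeration of $0'$, where $0'_s$ is the stage-$s$ approximation; since the $c_e$ decay geometrically and are pairwise "well-separated" (choose them so that no subset-sum collision can occur, e.g.\ $c_e$ comparable to $2^{-3e}$, which is summable and gives a Cantor-like set of attainable values), the exact real value $q$ determines the set $\{e : e\notin 0'\}$ uniquely, and we can read off each bit by approximating $q$ closely enough and comparing against the finitely many candidate sub-sums. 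This gives a $0'$-reduction, hence a Turing (indeed many-one on the appropriate coding) reduction from $0'$ to any $d_1$-name of $V$.

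I expect the main obstacle to be the a.e.\ continuity requirement combined with the subset-sum separation: making each block's $0$--$1$ replacement a.e.\ continuous (to get a.e.\ continuous $U$) while keeping its $L^1$-mass exactly $0$ on $V(1-V)$ is easy (any $\{0,1\}$-valued pattern works), but I must simultaneously guarantee (i) the $\dcut$-contribution of each block is $O(2^{-e})$ uniformly over the choice made, so the $\dcut$-name is genuinely computable and Cauchy at the right rate, (ii) the geometric weights $c_e$ are chosen so that $q = \sum_{e\notin 0'} c_e$ uniquely recovers the set — this needs $c_e > \sum_{e' > e} c_{e'}$, i.e.\ super-geometric-like decay such as $c_e = 3^{-e}$ or even simply $c_e = 2^{-2e}/100$ with a separation argument — and (iii) the boundaries of the blocks form a null set and the topology witnessing a.e.\ continuity is the standard one on $[0,1]^2$. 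A secondary subtlety: I should double-check that the "identity" graphon trick — replacing a grey constant $p$ on a square by a $0$--$1$ pattern of the same average $p$ — can be done with cut norm of the difference controlled purely by the area of the square (which it can, trivially, since both summands have cut norm at most the area), so that inserting the change at stage $s$ perturbs the $\dcut$-name by at most $2^{-\Omega(e)}$ and the finite-stage approximations form a rapidly converging Cauchy sequence after the standard thinning. Once these bookkeeping constraints are pinned down, both the forward construction of the computable $\dcut$-name and the backward $0'$-computation are routine.
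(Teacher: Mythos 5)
Your overall architecture (block-diagonal squares, each block either a grey constant or a random-free replacement triggered by halting, $\dcut$-computability from the smallness of the swap, and a $d_1$-decodable $L^1$ invariant) is the same as the paper's, and your decoding via $q=\int V(1-V)\,d\lambda$ with super-geometrically separated weights $c_e$ is a legitimate alternative to the paper's decoding (which instead reads off the set $X_U$ of values attained on positive measure, using value ranges $\{\ell_e,m_e,r_e\}$ made disjoint across blocks). But there is a genuine gap in the first half: your bound on the $\dcut$-perturbation caused by replacing block $e$ is a function of $e$ only (the area $2^{-2e-2}$, or $O(2^{-3e})$ via the triangle inequality on cut norms), \emph{not} of the stage $s$ at which the replacement happens. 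If program $0$ halts at some enormous, noncomputable stage $s_0$, then $\dcut(U_{s_0-1},U_{s_0})$ is a fixed positive constant, so $\<U_s\>$ has no computable modulus of convergence in $\dcut$ and cannot be thinned into a $\dcut$-name; indeed, certifying that $U_s$ is within $\epsilon$ of the limit for small $\epsilon$ would require knowing the halting status of the first few programs. The fix is to make the quality of the random-free replacement improve with the halting time: at stage $s$, swap in a random-free step function whose cut-norm distance from the grey constant is $<2^{-s}$ (this is what the paper does via $\normcut{H^*-G^*_s}<2^{-s}$), so that $\normcut{U_s-U_{s+1}}=O(2^{-s})$ regardless of which blocks flip.

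Relatedly, your concrete choice of replacement is wrong: a checkerboard of average $\tfrac12$ on an $N\times N$ subgrid does \emph{not} approach the constant $\tfrac12$ in cut norm as $N\to\infty$. Writing $D$ for the difference, $\int_{S\times T}D$ factors as $\tfrac12\bigl(\sum_i(-1)^i\lambda(S\cap J_i)\bigr)\bigl(\sum_j(-1)^j\lambda(T\cap J_j)\bigr)$, and taking $S=T$ to be the union of even-indexed cells gives cut norm at least $\tfrac18$ of the block's area for every $N$. You need quasirandom (Erd\H{o}s--R\'enyi-like) $0$--$1$ patterns, i.e., the density of $\StepGraphonsRF$ in $(\Graphons,\dcut)$, with the approximation quality tied to $s$ as above. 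Two smaller points: your opening sentence calls $U$ random-free, but the construction (and the theorem) requires the never-halting blocks to stay grey, so $U$ is not random-free; and once the replacements are finite step functions fixed at the halting stage, a.e.\ continuity is automatic (countably many steps, discontinuities on a null set of grid lines), so the caution there is unnecessary. With the stage-dependent replacement quality inserted, your decoding argument goes through: $\int V(1-V)$ is $1$-Lipschitz in $d_1$ and invariant under weak isomorphism, and the gap $c_e>\sum_{e'>e}c_{e'}$ lets you extract the bits of $0'$ sequentially.
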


\begin{proof}
	For each $n\in\Nats$, define the open interval $A_n \defas (1 - 2^{-n}, 1 -2^{-(n+1)})$. The graphon $U$ will take the value zero outside the block-diagonal $\bigcup_{n\in\Nats} (A_n \times A_n)$. 
	Also, for each $n\in\Nats$, let $\ell_n \defas 1- 2^{-(2n +1)}$, let 
	$r_n \defas 1 - 2^{-(2n+2)}$, and let
	$m_n \defas \frac{\ell_n + r_n}2$,
	so that
	\[
		0 < \ell_0 < m_0 < r_0 < \ell_1 < m_1 < r_1 < \ldots < 1.\]
	
	Define the constant graphon $H^*\equiv\frac12$.
	Because $H^*$ is computable in $\dcut$, 
	for each $s\in\Nats$ we can computably find a
	random-free step function graphon 
	$G^*_s\in\StepGraphons$ such that $\normcut{H^* - G^*_s} < 2^{-s}$.
	For each $e\in\Nats$, let $\iota_e \colon [0,1] \to [\ell_e, r_e]$ be the unique increasing linear bijection. Note that $\iota_e(\frac12) = m_e$.
	Let $G_{e,s} \defas \iota_e \circ  G^*_s$ and
	$H_{e} \defas \iota_e \circ  H^*$.
	Observe that 
	\[
		\normcut{H_e - G_{e,s}} < \normcut{H^* - G^*_s} < 2^{-s}.
		\]

	Now for $s\in\Nats$, define $U_s$ to be the graphon that is $0$ outside of 
	$\bigcup_{e \le s} (A_e \times A_e)$ and for each $e\le s$ is equal to the 
	scaling to fit $A_e\times A_e$ of the
	following graphon $K_{e,s}$ on $[0,1]\times[0,1]$:
	\[
		K_{e,s} \defas \begin{cases}
			H_e & \text{if~} \{e\}_s(0)\diverges, \text{~and}\\
			G_{e,t} & \text{if~} t \le s \text{~is minimal such that~}  \{e\}_t(0)\converges.
		\end{cases}
		\]
	Note that 
	\begin{align*}
		\normcut{U_s - U_{s+1}} 
		&\le 2^{-s} + \sum_{n> s} \lambda(A_n \times A_n) \\
		&< 2^{-s} + 2^{-(s+1)} \cdot 2^{-(s+1)}\\
		&< 2^{-s+1}.
	\end{align*}
		Clearly the graphons $K_{e,s}$ are uniformly computable in $\dcut$, and so the sequence $\<U_s\>_{s\in\Nats}$ is a computable $\dcut$-name.
		For $x,y\in[0,1]$, define 
		\[
		U(x,y) = \lim_{s\to\infty} U_s(x,y)
		\]
		when it is defined, and $0$ otherwise.
		Note that $U$ is a limit of the sequence 
	$\<U_s\>_{s\in\Nats}$, and it
	is a.e.\ continuous, as it is piecewise constant (i.e., a step function with countably many steps).

	Note that for each $n\in\Nats$, we have 
	\[
		\lambda(U^{-1}\{\ell_n, m_n, r_n\}) = \lambda(A_n \times A_n) = 2^{-2(n+1)}
		\]
		by construction, as each $K_{n,s}$ only takes values among $\ell_n$, $m_n$, or $r_n$. 
	For any graphon $W$ 
	define
	the set 
	\[
		X_W \defas \bigl\{ x \in [0,1] \st \lambda\bigl(W^{-1}(x)\bigr) > 0\bigr\}.
		\]
	Note that for any $V$
weakly isomorphic to $U$, by
	Theorem~\ref{weakly-isomorphic-defthm} condition (3) we have $X_V = X_U$.
	Further, as 
	\[
		\{\ell_n, m_n, r_n\} \cap \{\ell_p, m_p, r_p\} = \emptyset
		\]
		for $p\ne n$, from a $d_1$-name for $V$ we can compute the countable discrete set $X_U$.
	But $m_e\in X$ if and only if $\{e \}(0)\diverges$, and so $X_U$ computes $0'$.
\end{proof}

\section{Almost-everywhere continuity}
\label{ae-cont-sec}

In this section, we describe a random-free graphon that is computable in $d_1$ but not weakly isomorphic to any a.e.\ continuous graphon.
Note that this is in contrast to the computable de~Finetti theorem \cite{MR2880271}, which can be seen as saying that in a $1$-dimensional analogue of this setting, the measurable object representing the sampler
is a.e.\ computable, and in particular a.e.\ continuous.
This provides another example of how the 2-dimensional case is considerably more complicated than the 1-dimensional case.

The notion of a.e.\ continuity is sensitive to the underlying topology of the space. Since a graphon is a function from $[0,1] \times [0,1]$ to $[0,1]$, it is reasonable to consider, as the topology on the domain, the product topology with respect to the usual topology on $[0,1]$. But there are situations where it is natural to consider graphons that are a.e.\ continuous with respect to other topologies on $[0,1]$ but are not weakly isomorphic to an a.e.\ continuous graphon with respect to the standard topology on $[0,1]$. We show that our result holds even in these more general situations, as long as the topology on $[0,1]$ still generates the Borel sets.

We begin by describing the construction of a random-free graphon $G$, which
can be thought of
as a symmetric measurable subset of $[0,1]^2$. We have drawn this measurable subset in Figure~\ref{fractal}
as a black ($1$) and white ($0$) picture, with $(0,0)$ in the upper-left corner 
(similar to an adjacency matrix, and as is common when drawing graphons). 

\begin{construction}
	First, draw a $2 \times2$ square grid (given by products of the intervals $[0,\frac12)$ and $[\frac12, 1]$ on each axis) and color the 2 squares on 
the diagonal black. Then on each of the 2 off-diagonal squares, draw
	a $4\times 4$ square grid (similarly, from products of half-open or closed intervals) and color the 8 diagonal squares black. Then on each of the remaining
24 squares, draw an $8\times 8$ grid and color the diagonal black. Continue in this way, filling diagonal squares within unfilled squares
	to obtain the graphon $G$.
\end{construction}

This clearly describes a symmetric measurable subset of $[0,1]^2$, and hence a random-free graphon.

\begin{figure}[h]
\includegraphics[scale=0.7]{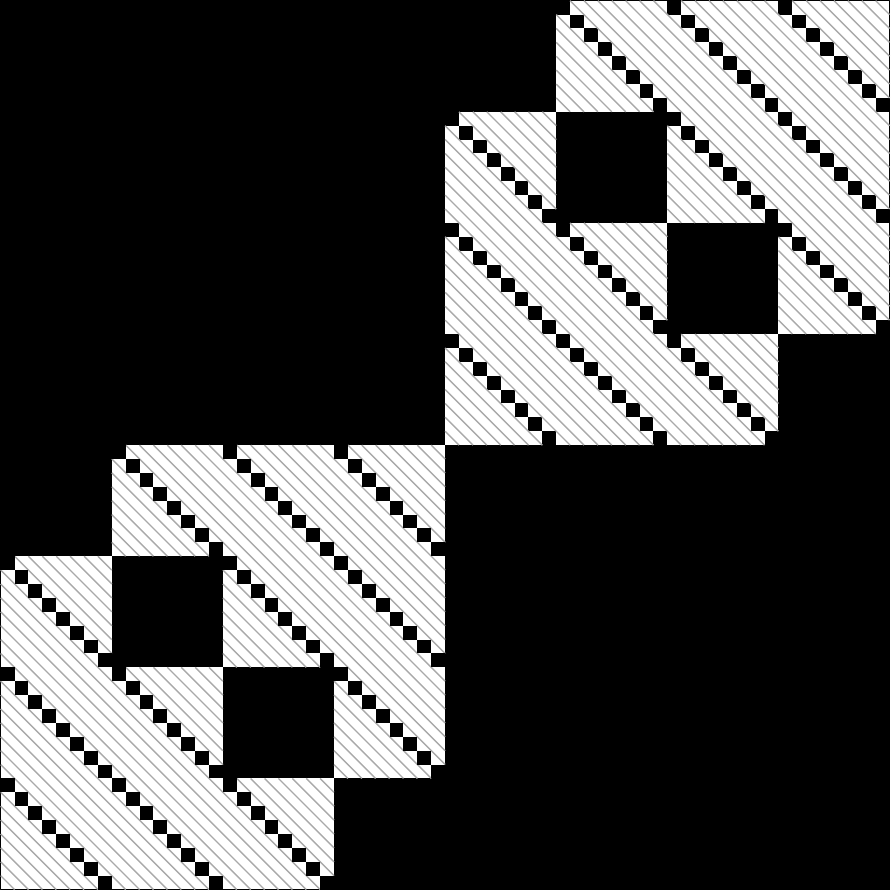}\\
	\caption{The graphon $G$, viewed as a subset of $[0,1]^2$.}
	\label{fractal}
\end{figure}

The countable random graph induced by sampling from $G$ may be thought of informally in the following way, which shows that it can be sampled in polynomial time:
There is a questionnaire with an infinite list of questions indexed by positive integers. The $n$th question has $2^n$ possible answers. 
Each vertex corresponds to a person who has independently answered each question uniformly at random, independently from each other person.
Two vertices are connected by an edge when the corresponding people agree on at least one answer to their questionnaires.

\begin{lemma} The graphon $G$ constructed above is 
	not a.e.\ continuous, has a computable $d_1$-name, and is twin-free.
\end{lemma}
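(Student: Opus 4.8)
The statement has three parts, and I would treat them separately, since they draw on quite different ideas. The computability of a $d_1$-name is the easiest: $G$ is by construction a step function with countably many steps, and the natural finite truncations (stop the recursion after finitely many levels, say after the grids of size up to $2^n$, and color the undecided squares black or white arbitrarily) form an explicit sequence of random-free step functions in $\StepGraphons$. I would estimate the $d_1$-distance between consecutive truncations: at level $n$ the total Lebesgue measure of the ``not-yet-decided'' squares shrinks geometrically (each level keeps only the off-diagonal fraction $1-\tfrac{1}{k}$ of a square of size $2^k$), so the tail is summable and one can thin out to get a rapidly converging Cauchy sequence with a computable index sequence. This gives the computable $d_1$-name. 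Twin-freeness is also essentially a bookkeeping argument: given distinct $x\ne y$, there is a first recursion level at which $x$ and $y$ fall into different sub-squares of the current diagonal block (or one of them leaves the block-diagonal entirely), and at that level one can exhibit a positive-measure set of $z$ where exactly one of $(x,z),(y,z)$ is colored black — for instance a $z$ in the same fine sub-block as $x$ but not as $y$. I would make this precise by tracking the ``address'' of a point (the sequence of answers in the questionnaire picture) and noting that two distinct addresses diverge at some finite coordinate.

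\textbf{The main obstacle: non-a.e.-continuity.} The heart of the lemma is showing $G$ is not a.e.\ continuous, and moreover (as the surrounding text promises) not weakly isomorphic to any a.e.\ continuous graphon in \emph{any} topology on $[0,1]$ generating the Borel sets — though the bare statement here only asserts $G$ itself is not a.e.\ continuous, so I would prove that and set up the hooks for the stronger claim. The obstruction is that $G$ is a ``fractal'' of black diagonal blocks of all scales: near the main diagonal, at every scale there are both large black regions and large white regions, so no matter which full-measure set $X \subseteq [0,1]^2$ we restrict to, the restriction $G|_X$ cannot be continuous at a.e.\ point of the diagonal. More carefully: I would show that for a.e.\ $x$, the point $(x,x)$ (or points approaching the diagonal) is a point of non-continuity of $G|_X$ for every full-measure $X$, because in every neighborhood of such a point $G$ takes both the value $0$ and the value $1$ on sets of positive measure — hence on sets meeting $X$. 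Translating ``positive measure in every neighborhood'' into ``meets $X$ in points arbitrarily close'' uses only that $\lambda(X)=1$. The technical care is in choosing the right sequence of shrinking neighborhoods adapted to the recursive construction and verifying the two-valued-on-positive-measure property uniformly.

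\textbf{Order of attack.} First I would fix notation for the recursive construction via addresses: a point $x\in[0,1]$ gets a sequence $a_1(x), a_2(x),\dots$ with $a_n(x)\in[2^n]$ recording which sub-interval of the relevant grid it lies in, and $G(x,y)=1$ iff $a_n(x)=a_n(y)$ for some $n$ (this matches the questionnaire description and makes all three parts transparent). Second, the $d_1$-name: define truncations $G^{(N)}$ agreeing with $G$ on all squares decided by level $N$ and extended arbitrarily (say by $0$) elsewhere; bound $d_1(G,G^{(N)})$ by the measure of undecided squares, which is $\prod_{k=2}^{N}(1-\tfrac1k)\cdot(\text{const})\to 0$ fast enough; thin and conclude. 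Third, twin-freeness via divergence of addresses. Fourth, and with the most work, non-a.e.-continuity: suppose $\lambda(X)=1$ and $G|_X$ continuous; derive a contradiction by locating a point of $X$ near the diagonal in a region where arbitrarily nearby $G$ takes both values on positive-measure sets, using regularity of Lebesgue measure to intersect those sets with $X$. I expect this last step — in particular, getting the quantifiers right so that the argument is robust to replacing $G$ by a weakly isomorphic graphon and the standard topology by an arbitrary Borel-generating one — to be the real content; the first three parts are routine once the address formalism is in place.
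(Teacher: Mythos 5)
The twin-freeness argument via diverging addresses is fine and matches what the paper does (the paper only says ``each horizontal line gives rise to a different cross-section''). But two of your three parts have genuine problems. In the $d_1$ part, the claim that ``the total Lebesgue measure of the not-yet-decided squares shrinks geometrically'' is false: at level $k$ the surviving off-diagonal fraction is $1-2^{-k}$ (not $1-\tfrac1k$), so the undecided region at stage $N$ has measure $\prod_{k=1}^{N}(1-2^{-k})$, which decreases to $\alpha \defas \tfrac12\cdot\tfrac34\cdot\tfrac78\cdots>0$. That positive limit is precisely $\lambda\bigl(G^{-1}(\{0\})\bigr)$, the quantity that makes the \emph{first} part of the lemma true, so your premise would actually entail that $G=1$ a.e.\ and is a.e.\ continuous. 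It also means you cannot color the undecided squares ``arbitrarily'': coloring them black leaves you at $d_1$-distance at least $\alpha$ from $G$ forever. You must color them white, and then $d_1(G^{(N)},G)$ is the measure of the black area still to be added, namely $\prod_{k=1}^{N}(1-2^{-k})-\alpha \le 1-\prod_{k=N+1}^{\infty}(1-2^{-k}) \le 2^{-N}$, which is the paper's argument (together with the computability of $\alpha$).

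The non-a.e.-continuity argument is aimed at the wrong set of points. The diagonal is Lebesgue-null, so a full-measure $X$ may avoid it entirely; moreover a small neighborhood of a typical diagonal point $(x,x)$ lies inside a level-one black square, so $G$ does \emph{not} take both values near it. The discontinuities you need live on the white set: the two facts to establish are (i) $\lambda\bigl(G^{-1}(\{0\})\bigr)=\alpha>0$, so that every full-measure $X$ contains a white point, and (ii) the black set contains a dense open subset of $[0,1]^2$ (equivalently, the white set is nowhere dense), so that every neighborhood of every point meets $X$ in black points. Together these make $G|_X$ discontinuous at each white point of $X$. This is the paper's route, and the one piece of real work in the lemma is the verification that $\alpha>0$, via $-\log\prod_{n\ge 1}(1-2^{-n})=\sum_{k\ge 1}\frac{1}{k(2^{k}-1)}<\infty$; your proposal never engages with this computation, and its false claim about the undecided squares is inconsistent with it.
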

\begin{proof}
	The black region $G^{-1}(\{1\})$ is clearly dense. Also the white region $G^{-1}(\{0\})$ has measure equal to $\alpha \defas \frac{1}{2}\cdot\frac{3}{4}\cdot\frac{7}{8}\cdots$
which is positive by the following calculation:
	\begin{align*}
-\log(\frac{1}{2}\cdot\frac{3}{4}\cdot\frac{7}{8}\cdots) 
		&=-\sum_{n=1}^{\infty}\log(1-2^{-n})\\
		&=\sum_{n=1}^{\infty}\sum_{k=1}^{\infty}\frac{2^{-kn}}{k}\\
		&=\sum_{k=1}^{\infty}\frac{1}{k}\sum_{n=1}^{\infty}2^{-kn}\\
		&=\sum_{k=1}^{\infty}\frac{1}{k}\frac{1}{2^{k}-1}<\infty.
	\end{align*}
So this graphon itself is not a.e.\ continuous, since 
	$G^{-1}(\{0\})$
	is a nowhere dense set of positive measure.

But $G$ has a computable $d_1$-name since one can approximate the
graphon in $d_1$ with some initial stage of the construction, as we now describe.
	Let $G_n$ be the $n$th stage of the construction, and let $\beta_n \defas \int G_n \, d\lambda$. Then the measure of the remaining black area yet to be added to $G$ is $(1-\alpha) - \beta_n$, a computable real that rapidly tends to $0$ as $n\to\infty$.

Finally, observe that $G$ is twin-free, 
	as by construction, each horizontal line gives
rise to a different cross-section.
\end{proof}

The main result of this section is that no graphon $H$ weakly isomorphic to $G$ is a.e.\ continuous (even with respect to other topologies that generate the Borel sets). The key combinatorial fact is the following.

\begin{lemma}
	\label{product-measure-zero}
	Let $G$ be the random-free graphon constructed above.
	Suppose $X, Y \subseteq [0,1]$ are measurable sets such that
	$X\times Y$ is contained in $G^{-1}(\{0\})$ up to a nullset.
	Then $\lambda(X\times Y) = 0$.
\end{lemma}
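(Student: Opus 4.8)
The plan is to exploit the self-similar structure of $G$: the white region $G^{-1}(\{0\})$, restricted to any off-diagonal block, is (after rescaling) a copy of all of $G^{-1}(\{0\})$, while $G$ is identically $1$ on each diagonal block at every scale. So if a rectangle $X \times Y$ meets a diagonal block in positive measure at some scale, it cannot be (essentially) contained in the white region. The hope is that this forces $X$ and $Y$ to ``avoid'' more and more at every scale, which can only happen if one of them is null.

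First I would set up notation for the dyadic-type structure. At stage $1$, $[0,1]$ is split into intervals $I_0 = [0,\tfrac12)$, $I_1 = [\tfrac12,1]$, and $G \equiv 1$ on $I_0 \times I_0$ and $I_1 \times I_1$. So if $X\times Y \subseteq G^{-1}(\{0\})$ up to a nullset, then for $i \in \{0,1\}$ we must have $\lambda(X\cap I_i)\cdot\lambda(Y\cap I_i) = 0$; that is, for each $i$, either $X$ or $Y$ is null on $I_i$. Within each off-diagonal block $I_i \times I_j$ ($i\ne j$), the graphon $G$ restricted there is an affinely-rescaled copy of $G$ on a $4\times 4$ subgrid whose diagonal is black — more precisely, inside $I_i$ the interval is further subdivided into four pieces, and $G$ is $1$ on the four sub-diagonal squares. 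Iterating, at stage $n$ we have a partition of $[0,1]$ into $2\cdot 4\cdot 8\cdots = 2^{n(n+1)/2}$ intervals (the precise count is not important), and $G$ is forced to be $1$ on each ``diagonal'' square of the refinement appearing inside a block that was off-diagonal at every earlier stage.

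The key combinatorial step: I would argue that the constraint ``$X \cap J$ or $Y \cap J$ is null'' propagates down the tree of blocks. Concretely, assign to each block $J \times J'$ surviving to stage $n$ (i.e., lying in an off-diagonal block at every stage $\le n$) a label in $\{X\text{-null}, Y\text{-null}\}$ recording which factor is null there; then when we subdivide $J\times J'$ at the next stage into a grid with black diagonal, for each diagonal subsquare $L\times L$ we need $\lambda(X\cap L)\lambda(Y\cap L)=0$, and for each \emph{off}-diagonal subsquare we pass the constraint to the next stage. The point is that the diagonal subsquares $L\times L$ at stage $n+1$ exhaust the ``new'' part of $J$, and a Lebesgue density / Borel--Cantelli style argument shows that the set of points in $X$ that are never cut out (i.e., that lie in the lim sup of surviving off-diagonal blocks) has the same measure as $X$, because the measure of $[0,1]$ covered by diagonal squares up through stage $n$ tends to $1-\alpha < 1$, so the off-diagonal ``survivor'' set has measure $\alpha > 0$ — wait, this needs more care, since survivors shrink.

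Let me restructure the final step, since that is the main obstacle. I expect the cleanest route is a direct measure computation rather than a tree argument: consider $\lambda(X)\cdot\lambda(Y)$ and show it must be $0$ by decomposing $[0,1]^2$ along the block structure. Write $D_n$ for the union of all diagonal squares introduced by stage $n$ (so $\lambda(D_n)\nearrow 1-\alpha$) and $R_n = [0,1]^2\setminus D_n$ for the complement, which is a disjoint union of off-diagonal survivor blocks $B^n_k = J^n_k\times (J^n_k)'$. Since $X\times Y$ is a.e.\ inside $G^{-1}(\{0\}) \subseteq R_n$ for every $n$ (as $G\equiv 1$ on $D_n$), we get $\lambda(X\times Y) = \sum_k \lambda\bigl((X\times Y)\cap B^n_k\bigr)$, and inside each $B^n_k$ the self-similarity of $G$ reduces the problem to the same statement one scale down. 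The hard part is quantifying the contraction: each survivor block $B^n_k = J^n_k \times (J^n_k)'$ has $\lambda(J^n_k) = \lambda((J^n_k)')$ equal to the side length, and the black diagonal occupies a fixed fraction of each block's ``width'' at the next subdivision; so $\lambda\bigl((X\times Y)\cap B^n_k\bigr) \le (1 - c)\,\lambda(J^n_k)\lambda((J^n_k)') \cdot \frac{\lambda(X\cap J^n_k)}{\lambda(J^n_k)}\cdot\frac{\lambda(Y\cap (J^n_k)')}{\lambda((J^n_k)')}$ is not quite the right bound — what I actually want is to show $\sum_k \lambda(X\cap J^n_k)\lambda(Y\cap (J^n_k)') \to 0$. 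For this I would use: at each subdivision step, within an off-diagonal block the factors $X$ and $Y$ must be ``disjointly supported'' on the new diagonal pieces (one is null on each diagonal subsquare), and combine this with the fact that the diagonal subsquares cover a definite fraction — leading to a geometric decay $\sum_k \lambda(X\cap J^{n+1}_k)\lambda(Y\cap (J^{n+1}_k)') \le \theta \sum_k \lambda(X\cap J^n_k)\lambda(Y\cap (J^n_k)')$ for some $\theta < 1$ depending only on the construction. Since $\lambda(X\times Y) \le \sum_k \lambda(X\cap J^n_k)\lambda(Y\cap(J^n_k)') \le \theta^n \lambda(X)\lambda(Y) \to 0$, we conclude $\lambda(X\times Y) = 0$. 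Establishing that uniform contraction factor $\theta$ — i.e., that passing to the off-diagonal survivors at one scale strictly decreases the relevant bilinear sum by a fixed ratio, using the ``$X$-null or $Y$-null on each diagonal subsquare'' dichotomy plus Cauchy--Schwarz or a convexity estimate — is the crux of the argument and the step I would spend the most care on.
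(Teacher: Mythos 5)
Your high-level strategy is the same as the paper's: work scale by scale, use the black diagonal drawn inside each surviving block to force a ``one factor is null'' dichotomy, and turn that dichotomy into a geometric contraction of a bilinear quantity. But the decisive step --- producing the uniform contraction factor $\theta<1$ --- is exactly the step you leave open (``the crux of the argument and the step I would spend the most care on''), so as written the proof has a genuine gap. Moreover, the intuition you lean on to get $\theta$, namely that ``the diagonal subsquares cover a definite fraction,'' would not close it: the newly blackened diagonal inside a block subdivided into an $m\times m$ grid covers only a $1/m$ fraction of that block's area, which tends to $0$, and the total black area of $G$ is $1-\alpha<1$, so no area-coverage argument yields a scale-independent contraction. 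There is also a notational slip that hints at the missing idea: inside an off-diagonal block $J\times J'$ with $J\ne J'$ the blackened subsquares are not of the form $L\times L$ but $L_k\times L'_k$ with $L_k\subseteq J$ and $L'_k\subseteq J'$ distinct; the constraint is therefore a \emph{matching}: for each $k$, either $X$ is null on $L_k$ or $Y$ is null on $L'_k$.

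That matching is what the paper exploits. Let $X_n$ (resp.\ $Y_n$) be the union of the scale-$n$ dyadic intervals meeting $X$ (resp.\ $Y$) in positive measure; for any pair $I\subseteq X_n$, $J\subseteq Y_n$ at scale $n$, each scale-$(n+1)$ subinterval $I'\subseteq I$ is matched to a unique $J'\subseteq J$ with $I'\times J'$ black, so for each matched pair at least one member drops out at the next scale. Summing over the matching, at least half of the combined length $\lambda(I)+\lambda(J)$ is removed, i.e.\ $\lambda(X_{n+1}\cap I)+\lambda(Y_{n+1}\cap J)\le\lambda(I)$, and the arithmetic--geometric mean inequality (your ``convexity estimate'') converts this additive loss into the multiplicative bound $\lambda(X_{n+1}\cap I)\,\lambda(Y_{n+1}\cap J)\le\lambda(I)^2/4$. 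Summing over all pairs gives $\lambda(X_{n+1}\times Y_{n+1})\le\tfrac14\lambda(X_n\times Y_n)$, hence $\lambda(X\times Y)=0$ with $\theta=1/4$. Note also that this bookkeeping is simpler than your survivor-block decomposition: one tracks all pairs of positive-measure dyadic intervals at each scale (pairs lying in an already-black square are handled trivially, since then every subsquare is black), rather than only the off-diagonal survivor blocks.
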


\begin{proof}
	For each $n\in\Nats$, let $X_n$ (respectively $Y_n$) be the union of all dyadic half-open intervals 
	of size $2^{-(2^n - 1)}$ whose intersection with $X$ (respectively $Y$) has positive measure.  
	To show that $\lambda(X \times Y) = 0$, we will show by induction that $\lambda(X_n \times Y_n) \leq 4^{-n}$.  
	
	The base case is trivial as $\lambda(X_0 \times Y_0) \leq 1$.	For the induction step, consider each dyadic square $I \times J$ where $I \subseteq X_n$ and $J \subseteq Y_n$ are both of size $2^{-(2^n - 1)}$.  By construction, for each sub-dyadic interval $I' \subseteq I$ of size $2^{-(2^{n+1} - 1)}$, there is a corresponding dyadic interval $J' \subseteq J$ of the same size such that $I' \times J'$ is a black square.  If $I'$ is disjoint from $X$ (up to a nullset), then $I' \subseteq X_n \setminus X_{n+1}$.  Otherwise, $J'$ is disjoint from $Y$ (up to a nullset), and $J' \subseteq Y_n \setminus Y_{n+1}$;  for if not, then the black square $I' \times J'$ intersects $X \times Y$ outside a nullset, which cannot happen, since $X \times Y \subseteq G^{-1}(\{0\})$ is white. 
	After considering all such sub-dyadic intervals $I'$ we have that 
	\[\lambda(X_{n+1} \cap I) + \lambda(Y_{n+1} \cap J) \leq \frac{\lambda(I) +\lambda(J)}{2} = \lambda(I).\]
	By the arithmetic--geometric mean inequality,
	\begin{align*}
		\lambda\bigl((X_{n+1} \times Y_{n+1}) \cap (I\times J) \bigr)
	&= \lambda(X_{n+1} \cap I) \cdot \lambda(Y_{n+1} \cap J) \\
	&\leq \left(\frac{\lambda(X_{n+1} \cap I) + \lambda(Y_{n+1} \cap J)}{2}\right)^2 \\
	&\leq \left(\frac{\lambda(I)}{2}\right)^2 = \frac{\lambda(I)^2}{4}.
	\end{align*}
	Summing up over all such $I \times J$ and using the induction hypothesis we have  
	\[\lambda(X_{n+1} \times Y_{n+1}) \leq \frac{\lambda(X_n \times Y_n)}{4} \leq 4^{-(n+1)}.\]
	Therefore $\lambda(X \times Y) = 0$. 
\end{proof}

We may now prove the main result about $G$.

\begin{theorem}
	\label{not-ae-continuous}
	Let $G$ be the random-free graphon (which has a computable $d_1$-name) constructed above.
	Let $H$ be a graphon 
	weakly isomorphic to $G$,
	and let $\cT$ be some topology on $[0,1]$ every open set of which is a standard Borel set.
	Then $H$ is not a.e.\ continuous with respect to $\cT\times\cT$.
\end{theorem}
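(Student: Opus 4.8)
The plan is to derive a contradiction from the assumption that some graphon $H$ weakly isomorphic to $G$ is a.e.\ continuous with respect to $\cT\times\cT$. By Theorem~\ref{weakly-isomorphic-defthm}, there are measure-preserving maps $\varphi,\psi\colon[0,1]\to[0,1]$ with $G^\varphi=H^\psi$ a.e.; in particular $H$ is again random-free, and $\lambda(H^{-1}(\{0\}))=\alpha>0$. The heart of the matter is that a.e.\ continuity forces the white region $H^{-1}(\{0\})$ to contain a ``rectangle of positive measure'' in a suitable sense, which is exactly what Lemma~\ref{product-measure-zero} (transported to $H$) forbids. So first I would make the transport precise: since $\varphi$ is measure preserving, for measurable $X,Y\subseteq[0,1]$ the set $X\times Y$ lies in $H^{-1}(\{0\})$ up to a nullset iff $\varphi^{-1}(X)\times\varphi^{-1}(Y)$ lies in $(G^\varphi)^{-1}(\{0\}) = G^{-1}(\{0\})$ (up to a nullset, using $G^\varphi = H^\psi$ and that $\psi$-pullback of the $H$-white set is the $G^\varphi$-white set up to null), and $\lambda(X\times Y)=\lambda(\varphi^{-1}(X)\times\varphi^{-1}(Y))$. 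Hence Lemma~\ref{product-measure-zero} yields: if $X\times Y\subseteq H^{-1}(\{0\})$ up to a nullset, then $\lambda(X\times Y)=0$.

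Next I would extract the rectangle from a.e.\ continuity. Let $Z\subseteq[0,1]^2$ with $\lambda(Z)=1$ and $H|_Z$ continuous for $\cT\times\cT$. The white region $W_0 \defas H^{-1}(\{0\})\cap Z$ has measure $\alpha>0$ and is relatively open in $Z$ (preimage of the open set $[0,1)$ — or $\{0\}$ if we prefer, but $[0,1)$ suffices — under the continuous map $H|_Z$), so $W_0 = O\cap Z$ for some $\cT\times\cT$-open $O$. By definition of the product topology, $O$ is a union of basic rectangles $P\times Q$ with $P,Q$ $\cT$-open. Since $\lambda(O)\ge\lambda(W_0)=\alpha>0$ and $O$ is a countable union (the reals are hereditarily Lindel\"of for any topology generating the Borel sets? — safer: $O$ is a standard Borel set since each $\cT$-open set is, and $O=\bigcup (P\times Q)$; by inner regularity / a Vitali-type density argument one of the rectangles has positive measure), there exist $\cT$-open $P,Q$ with $\lambda(P\times Q)>0$ and $P\times Q\subseteq O$. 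Then $(P\times Q)\cap Z\subseteq W_0\subseteq H^{-1}(\{0\})$, and since $\lambda(Z)=1$ we get $P\times Q\subseteq H^{-1}(\{0\})$ up to a nullset with $\lambda(P\times Q)>0$. Applying the transported Lemma~\ref{product-measure-zero} gives $\lambda(P\times Q)=0$, a contradiction.

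The main obstacle is the measure-theoretic step asserting that a positive-measure union of basic open rectangles must contain a single rectangle of positive measure — this is not literally the definition of the product topology and needs justification via the hypothesis that $\cT$ generates the Borel sets. The cleanest route: each $P\times Q$ appearing is a (standard) Borel set, so $O$ is Borel; pick a $\cT\times\cT$-basic open $P\times Q\subseteq O$ containing a $\lambda$-density point of $O$ — but openness of $P\times Q$ in $\cT\times\cT$ does not give positive Lebesgue measure for free either. I would instead argue: $W_0$ has positive $\lambda$-measure, so by inner regularity of Lebesgue measure it contains a compact (in the usual topology) set $C$ of positive measure; cover $C$ by the rectangles $P\times Q$ forming $O$ — but these need not be usual-open. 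The robust fix is to note that we only used continuity to conclude $W_0$ is relatively $\cT\times\cT$-open in $Z$; what we actually need is weaker, namely a ``$\cT\times\cT$-open neighborhood structure'', and then invoke that any $\cT\times\cT$-open set of positive Lebesgue measure contains a basic rectangle of positive measure because $\cT$-open sets are Borel hence Lebesgue measurable, and a countable union of Lebesgue-measurable sets of measure zero has measure zero — so some rectangle in the union has positive measure (using hereditary Lindel\"ofness of the product, which follows since $\cT$, generating the Borel sets on the separable metrizable space $[0,1]$, is second countable or at least the product is Lindel\"of). I expect this Lindel\"of/countability bookkeeping to be the one genuinely delicate point; the rest is the transport argument plus a direct appeal to Lemma~\ref{product-measure-zero}.
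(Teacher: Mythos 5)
Your overall strategy --- produce a positive-measure $\cT\times\cT$-basic rectangle inside the white set and contradict Lemma~\ref{product-measure-zero} --- matches the paper's, but the step you dismiss as routine, the ``transport'' of Lemma~\ref{product-measure-zero} from $G$ to $H$, is where the proof's real work lies, and your justification of it is wrong as written. You assert that $X\times Y\subseteq H^{-1}(\{0\})$ up to a nullset iff a pulled-back rectangle lies in $(G^\varphi)^{-1}(\{0\})=G^{-1}(\{0\})$ up to a nullset. That identity is false: $(G^\varphi)^{-1}(\{0\})=(\varphi\times\varphi)^{-1}\bigl(G^{-1}(\{0\})\bigr)$. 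Pulling $X\times Y$ back along $\psi$ only places a positive-measure rectangle inside the white set of $G^\varphi$ (equivalently, of $H^\psi$), not inside the white set of $G$ itself, and Lemma~\ref{product-measure-zero} is a statement about the specific self-similar set $G^{-1}(\{0\})$. To land in $G$-coordinates you must push \emph{forward} along $\varphi$, and forward images under measure-preserving maps are precisely where naive arguments break: images of measurable sets need not be measurable, ``contained up to a nullset'' does not push forward, and forward images do not preserve measure. The paper's proof is devoted almost entirely to this point: it first uses twin-freeness of $G$ and \cite[Theorem~8.6 (vi)]{MR3043217} to replace the two maps of Theorem~\ref{weakly-isomorphic-defthm}(3) by a single $\psi$ with $H=G^\psi$ a.e.; then, given the rectangle $B\times C\subseteq H^{-1}(\{0\})$, it forms the pushforward measures $\mu_B,\mu_C$ of $\lambda(\cdot\,|B),\lambda(\cdot\,|C)$ along $\psi$, checks absolute continuity with respect to $\lambda$, shows $(\mu_B\otimes\mu_C)\{G=0\}=1$, and only then applies Lemma~\ref{product-measure-zero} to the resulting positive-measure sets on the $G$ side. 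That pushforward--support (or Radon--Nikodym density) device is the missing idea in your write-up; without it the ``transported'' lemma is unproved, and with two maps $\varphi,\psi$ you would need it twice.

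Your second worry --- that a $\cT\times\cT$-open set of positive measure need not obviously contain a single basic rectangle $P\times Q$ of positive measure, since $\cT\times\cT$ need not be hereditarily Lindel\"of (the Sorgenfrey plane being the standard caution) --- is legitimate, but you raise several candidate fixes without committing to one that works. The paper's route is to note that a.e.\ continuity makes $A=H^{-1}(\{0\})$ a $\lambda$-continuity set whose $\cT\times\cT$-interior has the same positive measure, and to extract from that interior a basic rectangle $B\times C$ with $\lambda(B),\lambda(C)>0$. Even granting this step, however, the decisive gap in your proposal is the transport step above.
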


\begin{proof}
	Because $G$ 
	is twin-free and weakly isomorphic to $H$,
	by \cite[Theorem~8.6 (vi)]{MR3043217}
	there is a measure-preserving map $\psi\colon [0,1] \to [0,1]$ such that
	$H = G^\psi$ a.e.
	Hence $H$ must be random-free as well.
	
	Now assume, towards a contradiction, that the map 
	$H$ is a.e.\ continuous with respect to $\cT\times\cT$.
	Because $\psi$ is measure-preserving, 
	\[
		\lambda\bigl(G^{-1}(\{0\})\bigr) = \lambda\bigl(H^{-1}(\{0\})\bigr).
		\]
	Define $A\subseteq[0,1]^2$ to be the set $H^{-1}(\{0\})$.

	Since $H$ is a.e.\ continuous with respect to $\cT \times \cT$,
	we have
	that $A$ is a $\lambda$-continuity set, and therefore its interior (in the
	product topology $\cT \times \cT$) is standard Borel and has the same (positive)
	measure as $A$. Hence there is some open set $B\times C\subseteq A$
	where $B$ and $C$ are $\cT$-open sets of $[0,1]$ (and hence are standard Borel sets)
	that have positive measure.
	
	Let $\lambda(\cdot|B)$ denote Lebesgue measure conditioned on $B$, that is 
	\[
		\lambda(A|B) = \frac{\lambda(A \cap B)}{\lambda(B)}.
		\]
		Both
	$\lambda(\cdot |B)$ and $\lambda(\cdot|C)$ are well-defined since $B$ and $C$ have positive measure.  
	Now, let $\mu_B$ and $\mu_C$ denote the pushforward measures on $[0,1]$ of $\lambda(\cdot|B)$ and $\lambda(\cdot|C)$ along the map $\psi$.
	That is, $\mu_B(S) = \lambda(\psi \in S | B)$ for all measurable $S \subseteq [0,1]$, and likewise with $C$.  
	We claim that $\mu_B$ and $\mu_C$ are absolutely continuous with respect to $\lambda$.  
	Indeed, if $\lambda(S) = 0$, then 
	\[\mu_B(S) = \lambda(\psi \in S|B) \leq \frac{\lambda(\psi \in S)}{\lambda(B)} = \frac{\lambda(S)}{\lambda(B)} = 0.\]
	Therefore, the supports of $\mu_B$ and $\mu_A$ have positive $\lambda$-measure. 
	
	Because $\psi$ is a measure-preserving map, we have
	\begin{align*}
		(\mu_B \otimes \mu_C) \{G=0\} 
		&= \frac{1}{\lambda(A)\lambda(B)} \int_C \int_B \Ind{\{G^\psi=0\}} \, d\lambda\,  d\lambda  \\
		&= \frac{1}{\lambda(A)\lambda(B)} \int_C \int_B \Ind{\{H=0\}} \, d\lambda \, d\lambda \\
	&= 1, 
	\end{align*}
	where the last equality follows from the fact that $H = 0$ on $B \times C$.
	
	Now let $X$ be the support of $\mu_B$ and $Y$ be the support of $\mu_C$. Then $X\times Y$ is 
	contained in $G^{-1}(\{0\})$ up to a nullset.
	By Lemma~\ref{product-measure-zero}, we have
	$\lambda(X\times Y)=0$, and so
	one of $X$ and $Y$ has measure $0$, a contradiction to the fact that $\mu_B$ and $\mu_C$ are absolutely continuous probability measures. 
	Hence $H$ is not a.e.\ continuous with respect to $\cT \times \cT$.
\end{proof}

We note that by taking the direct sum of this example with the lower bound construction of Section~\ref{lower-bound}, we may obtain a $0'$-computable $d_1$-name for a graphon for which no weakly isomorphic graphon is a.e.\ continuous and from which any $d_1$-name computes $0'$. 
Namely, scale the graphon from Theorem~\ref{negative-AH}
by $\frac12$ and place it on $[0,\frac 12]^2$, along with $G$ scaled by $\frac12$ on $[\frac12, 1]^2$.


\newpage
\begin{small}
\section*{Acknowledgments}
The authors would like to thank
Jan Reimann and
Ted Slaman
for helpful discussions.
Work on
this publication was made possible through the support of 
United States Air Force Office of Scientific Research (AFOSR)
Contract No.\ FA9550-15-1-0074.
J.A.\ was supported by 
AFOSR
MURI award FA9550-15-1-0053 and National Science Foundation (NSF) grant DMS-1615444.
C.E.F.\ was supported by 
United States Air Force (USAF) and the Defense Advanced Research Projects Agency
(DARPA) Contracts No.\ FA8750-14-C-0001 and FA8750-14-2-0004, Army Research Office (ARO) grant W911NF-13-1-0212, Office of Naval Research (ONR) grant N00014-13-1-0333, 
NSF grants DMS-0800198 and DMS-0901020, and grants from the John Templeton Foundation and Google.
D.M.R.\ was supported by a Newton International Fellowship, Emmanuel Research Fellowship, NSERC Discovery Grant, and Connaught Award.
Any opinions, findings and conclusions or recommendations expressed in this material are those of the authors and do not necessarily reflect the views of the United States Air Force, Army, Navy, DARPA, or the John Templeton Foundation.
\end{small}

\vspace*{-3pt}
\begin{small}

\newcommand{\etalchar}[1]{$^{#1}$}
\providecommand{\bysame}{\leavevmode\hbox to3em{\hrulefill}\thinspace}
\providecommand{\MR}{\relax\ifhmode\unskip\space\fi MR }
\providecommand{\MRhref}[2]{%
  \href{http://www.ams.org/mathscinet-getitem?mr=#1}{#2}
}
\providecommand{\href}[2]{#2}

\end{small}



\begin{thebibliography}{GMRBT08}

\bibitem[AAFRR17]{PPS2017}
N.~L. Ackerman, J.~Avigad, C.~E. Freer, D.~M. Roy, and J.~M. Rute, \emph{On
  computable representations of exchangeable data}, Workshop on Probabilistic
  Programming Semantics (PPS 2017), 2017.

\bibitem[Aus08]{MR2426176}
T.~Austin, \emph{On exchangeable random variables and the statistics of large
  graphs and hypergraphs}, Probab. Surv. \textbf{5} (2008), 80--145.

\bibitem[DJ08]{MR2463439}
P.~Diaconis and S.~Janson, \emph{Graph limits and exchangeable random graphs},
  Rend. Mat. Appl. (7) \textbf{28} (2008), no.~1, 33--61.

\bibitem[FR12]{MR2880271}
C.~E. Freer and D.~M. Roy, \emph{Computable de {F}inetti measures}, Ann. Pure
  Appl. Logic \textbf{163} (2012), no.~5, 530--546.

\bibitem[GMRBT08]{GMRBT08}
N.~D. Goodman, V.~K. Mansinghka, D.~M. Roy, K.~Bonawitz, and J.~B. Tenenbaum,
  \emph{Church: A language for generative models}, {Proc. of 24th Conf. on
  Uncertainty in Artificial Intelligence (UAI 2008)} (Corvalis, Oregon), AUAI
  Press, 2008, pp.~220--229.

\bibitem[HR09]{MR2519075}
M.~Hoyrup and C.~Rojas, \emph{Computability of probability measures and
  {M}artin-{L}\"of randomness over metric spaces}, Inform. and Comput.
  \textbf{207} (2009), no.~7, 830--847.

\bibitem[Jan13]{MR3043217}
S.~Janson, \emph{Graphons, cut norm and distance, couplings and
  rearrangements}, New York J. Math. Monographs, vol.~4, Univ. at Albany, State
  Univ. of New York, Albany, NY, 2013.

\bibitem[Kal02]{FMP2}
O.~Kallenberg, \emph{Foundations of modern probability}, 2nd ed., Springer, New
  York, 2002.

\bibitem[Kal05]{PSIP}
\bysame, \emph{Probabilistic symmetries and invariance principles}, Probability
  and its Applications, Springer, New York, 2005.

\bibitem[Lov12]{MR3012035}
L.~Lov{\'a}sz, \emph{Large networks and graph limits}, Amer. Math. Soc. Colloq.
  Publ., vol.~60, Amer. Math. Soc., Providence, RI, 2012.

\bibitem[MSP14]{Venture}
V.~Mansinghka, D.~Selsam, and Y.~Perov, \emph{Venture: a higher-order
  probabilistic programming platform with programmable inference}, 2014.

\end{thebibliography}
\end{document}